\newtheorem{theorem}{Theorem}
\newtheorem{corollary}[theorem]{Corollary}
\newtheorem{lemma}[theorem]{Lemma}
\newtheorem{proposition}[theorem]{Proposition}
\theoremstyle{definition}
\newtheorem{definition}[theorem]{Definition}
\newtheorem{remark}[theorem]{Remark}
\newtheorem{claim}[theorem]{Claim}
\newcommand{\cE}{\ensuremath{\mathcal E}}
\newcommand{\cF}{\ensuremath{\mathcal F}}
\newcommand{\cM}{\ensuremath{\mathcal M}}
\newcommand{\cN}{\ensuremath{\mathcal N}}
\newcommand{\cP}{\ensuremath{\mathcal P}}
\newcommand{\cQ}{\ensuremath{\mathcal Q}}
\newcommand{\cV}{\ensuremath{\mathcal V}}
\newcommand{\bbN}{{\ensuremath{\mathbb N}} }
\newcommand{\bbP}{{\ensuremath{\mathbb P}} }
\newcommand{\bbZ}{{\ensuremath{\mathbb Z}} }
\newcommand{\bA}{\ensuremath{\mathbf{A} }}
\newcommand{\E}{\ensuremath{\mathcal{E}}}
\newcommand{\1}{\mathbbm1}
\setlist[itemize]{leftmargin=*}
\setlist[enumerate]{leftmargin=*,label=(\roman*),ref=(\roman*)}
\newcommand{\pc}{p_{\mathrm{c}}}
\newcommand{\md}{\mathrm{d}}
\title{Sharpness of the phase transition for constrained-degree percolation}
\author[1]{Ivailo Hartarsky}
\author[2]{Roger W. C. Silva}
\affil[1]{\small Universite Claude Bernard Lyon 1, CNRS, Centrale Lyon, INSA Lyon, Université Jean Monnet, ICJ UMR5208, 69622 Villeurbanne, France
\texttt{hartarsky@math.univ-lyon1.fr}}
\affil[2]{\small Universidade Federal de Minas Gerais, Departamento de Estatística,  Av. Antônio Carlos 6627, Belo Horizonte, Brazil \texttt{rogerwcs@est.ufmg.br}}
\date{\today}
\begin{document}

\maketitle

\begin{abstract} We consider constrained-degree percolation on the hypercubic lattice. Initially, all edges are closed, and each edge independently attempts to open at a uniformly distributed random time; the attempt succeeds if, at that instant, both end-vertices have degrees strictly less than a prescribed parameter. The absence of the FKG inequality and the finite energy property, as well as the infinite range of dependency, make the rigorous analysis of the model particularly challenging. In this work, we show that the one-arm probability exhibits exponential decay in its entire subcritical phase. The proof relies on the Duminil-Copin--Raoufi--Tassion randomized algorithm method and resolves a problem of dos Santos and the second author. At the heart of the argument lies an intricate combinatorial transformation of pivotality in the spirit of Aizenman--Grimmett essential enhancements, but with unbounded range. This technique may be of use in other dynamical settings.
\end{abstract}

\noindent\textbf{MSC2020:} 60K35; 82B43
\\
\textbf{Keywords:} sharp threshold; constrained-degree percolation; pivotality

\section{Introduction}
Percolation models have been extensively studied for decades. Since their introduction in \cite{Broadbent57}, they have become central objects of research in probability and mathematical physics, with numerous variants of the original model proposed over time. Several models (see e.g.\ \cites{Jahnel23,Lichev24,Peled19,Garet18,Pete08,Grimmett10,Grimmett17,Holroyd21,Li20}) purposefully feature degree constraints, often making their analysis intricate. Our model of interest is the \emph{constrained degree percolation (CDP)} introduced in \cite{Teodoro14}. In CDP, each vertex may accept connections up to a fixed limit, after which no new edges can be formed, while all previously established connections remain intact. Specifically, it is a dependent continuous-time percolation process, defined by the following dynamics. At time zero, all edges are closed; each edge attempts to open at a uniformly distributed random time, and the attempt is successful if, at that moment, both of its end-vertices have degrees strictly smaller than a given parameter $\kappa$. A formal definition of the model is provided in Section \ref{model}.
Unlike classical Bernoulli percolation, the CDP exhibits dependencies of all orders at any fixed time. Furthermore, the absence of the Fortuin–Kasteleyn–Ginibre (FKG) inequality and the lack of the finite energy property make its analysis challenging.

\subsection{Model and result}\label{model}

Let us start by introducing the CDP on $\bbZ^d$, $d\geq 2$. Let $2\leq \kappa\leq 2d$ be an integer, which is fixed throughout the paper and kept implicit. We denote by $\cE$ the set of nearest neighbour (non-oriented) edges of $\bbZ^d$.

Let $U=(U_e)_{e\in\E}$ be a sequence of independent uniform random variables on $[0,1]$ with corresponding product measure $\bbP$. We denote by $\Omega$ the set of $U\in[0,1]^\cE$ with distinct coordinates and such that there is no infinite walk\footnote{A \emph{walk} is a sequence of vertices such that consecutive ones are joined by edges. A \emph{path} is a walk consisting of distinct vertices.} $x_1,x_2,\dots$ in $\bbZ^d$ such that $i\mapsto U_{x_ix_{i+1}}$ is strictly monotone. It is classical that $\bbP(\Omega)=1$, so we systematically restrict our attention to $U\in\Omega$. 

For any $p\in[0,1]$, we define the function $\omega_p:\Omega\to\{0,1\}^\cE$ as follows. We write $\omega_{p}(U,e)$ for the value of $\omega_p(U)$ at edge $e\in\cE$ and refer to $e$ as \emph{$p$-open} if $\omega_p(U,e)=1$, and \emph{$p$-closed} if $\omega_p(U,e)=0$. Initially, only the (at most one) edge with $U_e=0$ is open, that is, $\omega_0(U,e)=\1_{U_e=0}$. Each edge $e\in\cE$ attempts to open at time $U_e$ and is successful only if its addition to the currently open edges would not create a vertex of degree larger than $\kappa$. In other words, $p\mapsto \omega_p(U,xy)$ jumps from $0$ to $1$ at $p=U_{xy}$ if 
\[|\{z\in\bbZ^d:xz\in\cE,\omega_{U_{xy}-}(U,xz)=1\}|<\kappa\text{ and }|\{z\in\bbZ^d:yz\in\cE,\omega_{U_{xy}-}(U,yz)=1\}|<\kappa,\]
and stays equal to $0$ for all $p\in[0,1]$ otherwise. This definition is well-posed in $\Omega$, since for every $e\in\cE$ only a finite set of coordinates of $U$ suffices to determine $(\omega_p(U,e))_{p\in[0,1]}$.

We call a path \emph{$p$-open} if all of its edges are $p$-open. The event that $x\in\bbZ^d$ is connected to $y\in\bbZ^d$ by a $p$-open path is denoted by $x\xleftrightarrow{p}y$. We write $x\xleftrightarrow p \infty$ for the event that there exists an infinite $p$-open path starting at $x$. We consider the standard \emph{one-arm} probability
\begin{align}
\label{one_arm}\theta_n(p)&{}=\bbP(\exists x\in\bbZ^d,\|x\|_1=n,0\xleftrightarrow px),&\theta(p)&{}=\bbP(0\xleftrightarrow p\infty)=\lim_{n\to\infty}\theta_n(p).\end{align}
Since $\theta:[0,1]\to[0,1]$ is clearly non-decreasing, we naturally define 
\[\pc=\sup\{p\in[0,1]:\theta(p)=0\},\]
with the convention $\sup\varnothing=0$, which will never enter into effect. Our main result is the following.

\begin{theorem}[Sharp phase transition]
\label{th:main}Consider the CDP on $\bbZ^d$, $d\geq 2$, with $\kappa\in\{2,\dots,2d\}$. For $p<\pc$, there exists $\alpha_p>0$ such that for all $n\geq 1$, $\theta_n(p)\leq e^{-n\alpha_p}$.
\end{theorem}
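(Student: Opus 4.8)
The plan is to follow the Duminil-Copin--Raoufi--Tassion (DCRT) strategy for proving sharpness via the OSSS inequality, adapted to this dependent continuous-time setting. Recall that the DCRT method, given a monotone event $A_n$ (here, ``$0$ connected to distance $n$'') depending on i.i.d.\ inputs, bounds $\operatorname{Var}(\1_{A_n})$ (equivalently, a derivative in $p$ via a suitable Margulis--Russo-type formula) from below by $\sum_e \bbP(U_e\text{ is pivotal})$ times the event probability, provided one exhibits a randomized decision tree (algorithm) that reveals the input with small maximal revealment $\delta_n$ on each coordinate. Choosing the algorithm to explore the cluster of the origin in a breadth-first manner guarantees $\delta_n\le \theta_{n/2}(p)$-type bounds, so the OSSS inequality yields a differential inequality of the form $\theta_n'(p)\ge \frac{c\, n}{\sum_{k<n}\theta_k(p)}\,\theta_n(p)(1-\theta_n(p))$; a standard analysis of this inequality then gives the dichotomy: either $\theta_n(p)$ stays bounded away from $0$ (so $p\ge\pc$) or it decays exponentially. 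The skeleton of this argument is by now routine; the genuine difficulties are in the two ingredients it presupposes.

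\textbf{First obstacle: making sense of the derivative and pivotality.} In Bernoulli percolation the relevant quantity is $\sum_e\bbP(e\text{ pivotal})$ and Russo's formula identifies it with $\theta_n'(p)$. Here the model is driven by the clocks $(U_e)$, and increasing $p$ corresponds to all clocks ``firing up to time $p$''; the correct monotone coupling is $\omega_p(U)$ itself. One must therefore first establish that $\theta_n$ is differentiable (or at least Lipschitz with the appropriate a.e.\ derivative) and prove a Russo-type identity $\theta_n'(p)=\sum_e \bbP\big(U_e=p,\ e\text{ is ``pivotal'' at time }p\big)$ in a suitably discretized-then-limiting sense, where pivotality must be defined relative to the dynamics: flipping the outcome of the attempt at edge $e$ (success vs.\ failure) changes whether $A_n$ occurs. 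Because there is no finite energy, ``flipping $e$'' is not a simple resampling --- forcing $e$ open may be impossible given the degree constraints, and forcing $e$ closed can cascade, allowing other previously-blocked edges to open later. This is precisely where the OSSS inequality must be applied on the i.i.d.\ variables $(U_e)$ with the decision tree exploring the cluster using the full rule $\omega_p$, and the pivotality appearing in the OSSS lower bound is the honest ``$A_n$ depends on $U_e$'' event.

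\textbf{Second obstacle (the heart of the paper): the essential-enhancement-type pivotality transformation with unbounded range.} The OSSS machinery outputs a lower bound in terms of $\sum_e\bbP(U_e\text{ influential for }A_n)$, but to close the differential inequality one needs this to be comparable to $\theta_n'(p)$, i.e.\ to the sum over $e$ of the probability that $e$ is pivotal \emph{in the classical open/closed sense for the connection event}. The abstract argument only bounds $\theta_n'$ in terms of these influences from one side; for the other side --- converting an influence of $U_e$ into an honest pivotal connection, and controlling the combinatorial explosion caused by the degree constraint --- one argues in the spirit of Aizenman--Grimmett: if changing $U_e$ changes $\1_{A_n}$, then by locally rearranging clocks in a bounded (but, crucially here, \emph{unbounded}) neighbourhood one produces a configuration with a genuine pivotal edge for the open-path event, at a cost that is at most a constant (depending on $p$, $d$, $\kappa$) times the original probability. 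The unbounded range enters because blocking one edge can free up a long chain of edges along a path whose degrees were saturated, so the ``repair surgery'' must follow such chains; the key estimate is that the entropy/cost of these surgeries is summable. I would isolate this as a separate combinatorial lemma: \emph{there is $C=C(d,\kappa)$ such that $\sum_e \bbP(U_e\text{ influential for }A_n)\le C\sum_e\bbP(e\text{ pivotal for }A_n)=C\,\theta_n'(p)$}, proved by an explicit clock-surgery encoding. This step is the main obstacle; everything else is assembling the OSSS bound, the revealment estimate, and the ODE analysis in the standard way.
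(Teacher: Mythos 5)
Your skeleton (OSSS plus a Russo-type formula plus a pivotality transfer in the spirit of Aizenman--Grimmett) is the right family of ideas, but two steps that you treat as routine or assert without mechanism are precisely where the paper has to work, and as written they fail. First, the revealment bound. You apply OSSS to the standard one-arm event and claim that a cluster-exploration algorithm has revealment of order $\theta_{k}(p)$. In this model that is not true: to decide $\omega_p(U,e)$ the algorithm must reveal the clocks along all $U$-monotone (decreasing) paths emanating from $e$, which have unbounded range, so an edge gets revealed whenever it is reached by an open path \emph{followed by} a decreasing path of clocks --- an event whose probability is not controlled by the one-arm probability $\theta_k(p)$. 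The paper's fix is to replace $\theta_n$ by the modified one-arm event $E_n(p)$ (open ``head'' plus decreasing ``tail'' ending at $0$), prove the Russo formula and run the entire OSSS/differential-inequality machinery for $\tau_n(p)=\bbP(E_n(p))$, and only afterwards transfer back: $\theta_n\le\tau_n$ gives the subcritical decay, and the a.s.\ absence of infinite monotone paths gives $\tau(p)>0\Rightarrow\theta(p)>0$, hence $\pc=p_*$. Your proposal never addresses why the revealment of the clock variables is controlled, and the differential inequality you write for $\theta_n$ does not close.

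Second, the pivotality transfer. You correctly isolate the lemma $\sum_e\bbP(U_e\text{ influential})\le C\sum_e\bbP(e\text{ $p$-pivotal})$ as the heart, but the phrase ``the entropy/cost of these surgeries is summable'' is exactly what needs a new idea, and the naive surgery fails: if you repair the configuration in the neighbourhood of the whole decreasing cluster through which the influence of $U_e$ can propagate, the probability cost of that cluster being decreasing is only exponentially small in its volume (the paper points to the ternary space-filling tree), which cannot beat the exponential entropy of such clusters. The missing structural input is the switching-path lemma: flipping $U_e$ changes the open/closed states exactly along a single alternating path through $e$ whose two halves are decreasing, and the probability that a given path of length $m$ is decreasing is $p^m/m!$ --- super-exponential decay, which is what pays for both the path entropy and the local clock surgery. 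Moreover the surgery is genuinely more delicate than a local rearrangement when the influential edge lies on the decreasing tail of the witness (Case B of the paper), where one must additionally rewire along the decreasing path to the origin and produce $p$-pivotality of the fixed edge $0e_1$. Without the switching-path structure your encoding has no reason to be summable, so the central lemma is not established; a minor further point is that the constant cannot depend only on $(d,\kappa)$ --- it degenerates as $p\to0,1$ and is only uniform on compact subsets of $(0,1)$.
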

\begin{remark}
    Theorem~\ref{th:main} is stated and proved for $\bbZ^d$ with its usual graph structure for concreteness. However, the proof applies mutatis mutandis for any transitive locally finite bipartite graph. For non-bipartite graphs, the structure of the switching path from Lemma~\ref{lem:path} below becomes more complex, but is still sufficiently path-like for the argument to adapt to this case as well.
\end{remark}

As it is common in percolation, once sharpness is established, many questions about the subcritical phase become readily accessible. We illustrate this by the following result on the exponential decay of the volume of the cluster of the origin.

\begin{corollary}[Volume exponential decay]
    \label{cor:size}
    Consider the CDP on $\bbZ^d$, $d\ge 2$, $\kappa\in\{2,\dots,2d\}$, $p<\pc$. Set
    \[C_0=\{x\in\bbZ^d:0\xleftrightarrow px\}.\]
    Then there exists $\gamma_p>0$ such that for all $n\ge 1$, 
    \[\bbP(|C_0|\ge n+1)\le e^{-n\gamma_p}.\]
\end{corollary}
Corollary~\ref{cor:size} is proved in Appendix~\ref{app}.

\subsection{Background}
For $\kappa=2d$, the CDP reduces to the standard Bernoulli percolation model, for which Theorem~\ref{th:main} is a classical result \cites{Aizenman87,Duminil-Copin16a,Menshikov86}. We direct the reader to \cite{Grimmett99} for some background on this model.

It is known that the CDP undergoes a nontrivial phase transition for all $d\geq 2$ and most values of $\kappa$. For example, \cite{Delima20} establishes this non triviality for $d=2$ and $\kappa=3$, while also proving that percolation does not occur when $d\ge\kappa=2$, even at $p=1$. Further progress was made in \cite{Hartarsky22constrainedperco}, which provides quantitative upper bounds on the critical time and characterizes the phase transition for all $d\geq 3$ and most choices of $\kappa\ge3$. Nonetheless, it remains open to determine whether the phase transition is trivial ($\pc=1$) for $\kappa\in\{3,\dots,9\}$ for some dimensions, but it is conjectured that this is never the case. In view of this, let us emphasise that Theorem~\ref{th:main} applies regardless of whether the phase transition is trivial.

It is straightforward that the CDP is stochastically dominated by standard Bernoulli percolation for every fixed $p\in[0,1]$. Consequently, Theorem \ref{th:main} holds trivially whenever $p$ lies below the Bernoulli percolation critical threshold, denoted by $\overline\pc$. However, it is known that $\pc>\overline\pc$ (see Theorem 1 in \cite{Delima20} for the case $d=2$ and $\kappa=3$; the argument extends for the general case), and therefore Theorem~\ref{th:main} remains nontrivial for all choices of $d$ and $\kappa$.

Let $\widehat{\pc}=\sup\{p\in[0,1]: \sum_{x\in\bbZ^d}\bbP(0\xleftrightarrow{p}x)<\infty\}\le\pc$ denote the \emph{suscptibility critical threshold} for the CDP. The recent work \cite{DosSantos25} proves Theorem~\ref{th:main} for $p<\widehat{\pc}$. Our result strengthens their findings: in particular, Theorem~\ref{th:main} implies $\pc=\widehat{\pc}$. This answers \cite{DosSantos25}*{Question 1.} in the setting of homogeneous (vertex-independent) constraints $\kappa$.

\subsection{Outline of the proof}
\label{sec:outline}
A standard approach to proving sharpness in percolation models is the one of \cite{Duminil-Copin19} using a randomised algorithm (see Section~\ref{sec:DCRT}) and the OSSS inequality \cite{ODonnell05} (see Section~\ref{sec:OSSS}). Applying this method directly fails. On the one hand, the measure $\omega_p\circ\bbP$ is not monotonic and lacks other good properties. On the other hand, if we choose to work with the configuration space $[0,1]^\cE$ instead of $\{0,1\}^\cE$, edges revealed in the exploration of the occurrence of the one-arm event in \eqref{one_arm} do not witness this one-arm event on a smaller scale. For this reason, as in \cite{Hartarsky21a}, we introduce an alternative one-arm event (see Section~\ref{sec:one-arm}) for which revealment is naturally expressed in terms of its probability. Namely, this event requires a path whose first part (head) is open and whose second part (tail) has decreasing values of the uniform variables on edges. This modified one-arm event satisfies a Russo formula (see Lemma~\ref{lem:russo}).

Putting the above ingredients together, one reaches the main difficulty: the Russo formula features a different pivotality event ($p$-pivotality) from the influences appearing in the OSSS inequality ($U$-pivotality). In Proposition~\ref{prop:pivotality:transfer}, we provide a transfer from one to the other. The statement is largely inspired by the essential enhancement technique of \cite{Aizenman91}. However, the lack of finite-energy property and the infinite range of dependence make its implementation very delicate (see e.g.\ \cite{Duminil-Copin18} for a very different setting where an intricate unbounded range pivotality transfer is also performed). The base idea of the proof of Proposition~\ref{prop:pivotality:transfer} is simple. The probability that the state of an edge depends on the uniform variables far from it decays super-exponentially, while configuration modifications usually come at an exponential cost. We can then hope to modify the configuration in the neighbourhood of the decreasing cluster of a $U$-pivotal edge in such a way that we produce a $p$-pivotal edge. Unfortunately, there exist clusters whose probability of being decreasing is only exponentially small in their volume (e.g.\ the fractal ternary square space-filling tree).

In order to remedy this, we need to only focus on a special path through a $U$-pivotal edge $e$ that we call its switching path $\cP_e$ (see Lemma~\ref{lem:path}). While influence may spread outside of the switching path, the effect of changing the value of $U_e$ on the configuration of open/closed edges is to switch the state of the edges of $\cP_e$ and nothing else. This effect is dictated by the degree constraint and resembles the role of alternating paths in dimer configurations (perfect matchings). The advantage is that, along paths, the probability of being decreasing is super-exponential, as opposed to what is the case along clusters. This allows us to pay for a modification of the uniform variables on all edges on or next to the path.

This strategy is sufficient to treat the case in which, roughly speaking, the $U$-pivotal edge is in the head of the path witnessing the modified one-arm event. However, if $e$ is in the tail, we further need to modify the configuration around the decreasing path from $e$ to the origin, in addition to the switching path. The actual case distinction is a bit more intricate, but we refer the reader to the proof of Proposition~\ref{prop:pivotality:transfer} for the details.

\section{Proof}
\subsection{The one-arm event}
\label{sec:one-arm}

For a path $P=x_1,\dots,x_m$, we denote by $\cM(P)$ the event that $p\ge U_{x_1x_2}>U_{x_2x_3}>\dots>U_{x_{m-1}x_m}$, and say that $P$ is \emph{decreasing} whenever $\cM(P)$ occurs.
\begin{definition}[Modified one-arm]
\label{def:Enp}Let $n\ge 1$ be an integer and $p\in[0,1]$. The \emph{modified one-arm event} $E_n(p)\subset\Omega$ is defined by $U=(U_e)_{e\in\cE}\in E_n(p)$, if there exists a path of vertices $(x_0,\dots,x_k)$ with $\|x_0\|_1=n>\|x_i\|_1$ for $i\in\{1,\dots,k\}$, $x_k=0$, and an integer $l\in\{0,\dots,k\}$ such that the following properties hold.
\begin{itemize}
    \item For all $i\in\{1,\dots,l\}$, we have $\omega_p(U,x_{i-1}x_i)=1$.
    \item The path $x_l,\dots,x_k$ is decreasing. 
\end{itemize}
We refer to $x_0,\dots,x_l$ as the \emph{head} and to $x_l,\dots,x_k$ as the \emph{tail} of the path.
\end{definition}

In words, the sphere of radius $n$ is connected to an edge inside the box, from which a decreasing path reaches the origin. As we will see, this modified version of the standard one-arm event (corresponding to $l=k$ above) is motivated by the fact that, when we run the randomized algorithm to explore its occurrence, it is the modified version that occurs at edges being revealed.

 Notice that $E_n(p)$ is decreasing in $n$ and increasing in $p$ (since $p\mapsto\omega_p(U,e)$ is increasing). Moreover, $E_n(p)$ is measurable with respect to $(U_e\1_{U_e\le p})_{e\in\cE}$.

\subsection{Transforming pivotality}

\begin{definition}[Pivotal]
\label{def:pivotal}
Fix $U\in\Omega$, $p\in[0,1]$, $e\in\cE$, and an event $A\subset \Omega$. We say that $e$ is \emph{$U$-pivotal} for $A$ if there exists $U'\in\Omega$ with $U_f=U'_f$ for all $f\in\cE\setminus\{e\}$ such that $\1_A(U)\neq\1_A(U')$. We say that $e$ is \emph{$p$-pivotal} for $A$ if $U_e>p$ and, if we set $U_f=U'_f$ for all $f\in\cE\setminus\{e\}$ and $U'_e=p$, then $U'\in\Omega$ and $\1_A(U)\neq \1_A(U')$. We will only be interested in the event $A=E_n(p)$ from Definition~\ref{def:Enp}, so we do not specify it below.
\end{definition}
Write $\tau_n(p)=\bbP(E_n(p))$. The notion of $p$-pivotality is motivated by the following Russo formula (we refer the reader to Proposition 3 in \cite{Arcanjo26} for a related result).
\begin{lemma}[Russo formula]
\label{lem:russo}
For any $p$ and $n$, it holds that $\tau_n$ is differentiable on $[0,1)$ and
\begin{equation}
\label{eq:Russo}\frac{\md\tau_n(p)}{\md p}=\frac{1}{1-p}\sum_{e\in\cE}\bbP(e\text{ is $p$-pivotal}).\end{equation}
\end{lemma}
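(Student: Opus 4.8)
The plan is to express $\tau_n(p) = \bbP(E_n(p))$ by conditioning on the values of $(U_e)_{e\in\cE}$ on all edges and tracking how the indicator $\1_{E_n(p)}$ changes as $p$ increases. The key structural fact is the one already noted in the excerpt: $E_n(p)$ is measurable with respect to $(U_e \1_{U_e \le p})_{e\in\cE}$, so only edges with $U_e \le p$ matter, and among those the event is actually determined by finitely many of them (since each $\omega_p(U,e)$ depends on finitely many coordinates, and the decreasing-path condition is a finite combinatorial constraint once we fix which edges have $U_e \le p$). I would make this precise by first truncating to a large box $\Lambda_N \supset \{x : \|x\|_1 \le n\}$ and an approximating event $E_n^N(p)$ using only edges inside $\Lambda_N$ — this event depends on finitely many uniform variables — prove the formula there, and then pass $N \to \infty$ using monotone/dominated convergence (noting $E_n^N(p) \uparrow E_n(p)$, or handling the discrepancy, which is a routine limiting step).

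For the finite-dimensional event, the core is a first-order expansion in $p$. Fix a small $h > 0$ and compare $\1_{E_n(p+h)}$ with $\1_{E_n(p)}$. Since $E_n(\cdot)$ is increasing, the difference $\1_{E_n(p+h)} - \1_{E_n(p)}$ is nonnegative and supported on the event that $E_n(p+h)$ holds but $E_n(p)$ does not. I would argue that, conditionally on there being exactly one edge $e$ with $U_e \in (p, p+h]$ (an event of probability $|\cE \cap \Lambda_N| \cdot h + o(h)$ after localizing), the event $E_n(p+h) \setminus E_n(p)$ coincides up to $o(h)$ with the event that this special edge $e$ is $p$-pivotal: raising $U_e$ from some value $\le p$ down is exactly the operation in Definition~\ref{def:pivotal}, and conversely if $e$ is $p$-pivotal with $U_e > p$, then for $h$ small enough $U_e$ will not lie in $(p, p+h]$ unless... — here one must be slightly careful and instead compute $\frac{d}{dp}\bbP(E_n(p))$ directly. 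The cleaner route: write, for the finite event,
\[
\tau_n^N(p) = \sum_{\sigma} \bbP\bigl(E_n^N(p) \mid \text{sign pattern } \sigma\bigr)\,\bbP(\sigma),
\]
but more efficiently, differentiate under the integral. Condition on $(U_f)_{f \ne e}$ for a fixed $e$; then $p \mapsto \1_{E_n(p)}$ as a function of $U_e$ alone is, by monotonicity in $p$ and the structure of the event, a function that, as $U_e$ crosses certain thresholds, flips the membership. The derivative in $p$ picks up a boundary term at $U_e = p$ precisely when $e$ is $p$-pivotal; the factor $\frac{1}{1-p}$ arises because, conditionally on $U_e > p$ (which is forced for a $p$-pivotal edge), $U_e$ is uniform on $(p,1]$ with density $\frac{1}{1-p}$, and it is the event $\{U_e$ is about to be the next to enter$\}$ versus the density of $U_e$ at $p$ that produces this constant. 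Summing the contributions over all edges $e \in \cE \cap \Lambda_N$ gives \eqref{eq:Russo} for the truncated event.

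The main obstacle I anticipate is the bookkeeping in the finite-range reduction and in justifying that the $o(h)$ terms — events where two edges switch in $(p,p+h]$, or where an edge far outside $\Lambda_N$ affects the event — are genuinely negligible uniformly. In particular one must confirm that being $p$-pivotal forces $U_e > p$ and that the "other" configuration $U'$ with $U'_e = p$ lies in $\Omega$ (the excerpt's definition already builds this in, so it is a matter of checking the null-set exclusions behave), and that the sum over $e \in \cE$ on the right-hand side is finite — which follows since a $p$-pivotal edge must be "close" to the box of radius $n$ in the sense that its decreasing/open cluster reaches it, and the super-exponential decay of long decreasing paths (exploited later in Proposition~\ref{prop:pivotality:transfer}) makes the sum converge; at the level of this lemma it suffices that each truncated sum is finite and the limit exists. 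Differentiability on all of $[0,1)$ then follows from the continuity in $p$ of the right-hand side, which can be checked by the same finite-range approximation.
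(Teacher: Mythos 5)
Your core computation --- expanding $\tau_n(p+h)-\tau_n(p)$ over the event that exactly one edge has its uniform in $(p,p+h]$, identifying that edge as $p$-pivotal, and extracting the factor $\tfrac1{1-p}$ from the conditional law of $U_e$ given $U_e>p$ --- is exactly the paper's argument for the right derivative. However, the two steps you declare routine are where the real work sits, and as written they are gaps. The truncation to $\Lambda_N$ is both unnecessary and shaky: the paper needs no spatial cut-off, because $p'\mapsto\omega_{p'}(U,f)$ can change only at $p'=U_f$ and every edge of a witnessing path has an endpoint $u$ with $\|u\|_1<n$, so $\1_{E_n(p')}$ can flip in $(p,p+\delta]$ only if the \emph{finite} set $\cQ_{p,\delta}=\{uv\in\cE:\|u\|_1<n,\ U_{uv}\in(p,p+\delta]\}$ is nonempty; for the same reason only edges with an endpoint inside the box can be $p$-pivotal, so the sum in \eqref{eq:Russo} is a finite sum and no decay estimate is needed. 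By contrast, your truncated event $E_n^N(p)$ is not defined (what boundary condition for the constrained dynamics on $\Lambda_N$?), is not monotone in $N$, so ``$E_n^N(p)\uparrow E_n(p)$'' is unjustified, and even granting pointwise convergence $\tau_n^N\to\tau_n$, transferring the differential identity to the limit requires locally uniform convergence of the derivative sums --- exactly the kind of estimate that must be proved, not a routine limiting step.

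The genuinely missing idea is the passage from the right derivative to differentiability on $[0,1)$. Your one-sided expansion at $p-h$ gives the left increment in terms of $(p-h)$-pivotality, so you must show that $q\mapsto\sum_{e\in\cE}\bbP(e\text{ is $q$-pivotal})$ is (left-)continuous; you assert this ``can be checked by the same finite-range approximation'' without an argument, and since pivotality is a long-range event (through the degree-constraint propagation), continuity in $p$ is not automatic. The paper devotes the second half of its proof to this: it conditions on the a.s.\ finite set $\cF$ of edges reachable by decreasing paths from the box at $p=1$; given $\cF=F$, pivotality depends only on $(\1_{U_f\le p})_{f\in F}$ and the ordering of $(U_f)_{f\in F}$, so each summand $\bbP(\cF=F,\,e\text{ is $p$-pivotal})$ is a polynomial in $p$, and the series converges uniformly because $\sum_F\bbP(\cF=F)=1$. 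An approximation scheme in your spirit could replace this, but only after verifying that the approximation error is uniform in $p$ (e.g.\ via the factorial cost of long monotone paths); as it stands, this step is asserted rather than proved, and it is the part of the lemma that actually requires an argument.
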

\begin{proof}
Let $\delta>0$ and write
\begin{equation}\label{russo_1} \tau_n(p+\delta)-\tau_n(p)=\bbP\left(E_n(p+\delta)\setminus E_n(p)\right),
\end{equation}
recalling that $p\mapsto E_n(p)$ is increasing. Let $\cQ_{p,\delta}=\{uv\in\cE:\|u\|_1<n,U_{uv}\in(p,p+\delta]\}$. Observe that, by Definition~\ref{def:Enp}, for $U\in E_n(p+\delta)\setminus E_n(p)$, we have $\cQ_{p,\delta}\neq\varnothing$, since $p\mapsto\omega_p(U,e)$ may only change at time $U_e$ and similarly for $p\mapsto \1_{p\ge U_e\ge U_{e'}}$, where $e,e'\in\cE$. Note that $\bbP(|\cQ_{p,\delta}|\geq 2)=o(\delta)$. Moreover, if $U\in E_n(p+\delta)\setminus E_n(p)$, $|\cQ_{p,\delta}|=1$ and $U_e\neq p$ for all $e\in\cE$, then the unique edge must be $p$-pivotal. Hence, \eqref{russo_1} becomes
\begin{align*}\label{russo_2}
\tau_n(p+\delta)-\tau_n(p)&{}=\bbP(E_n(p+\delta)\setminus E_n(p),|Q_{p,\delta}|=1)+o(\delta)=\sum_{e\in\E}\bbP(E_n(p+\delta)\setminus E_n(p),\cQ_{p,\delta}=\{e\})+o(\delta)\\
&{}=\sum_{e\in\E}\bbP\left(\mbox{$e$ is $p$-pivotal},U_e\in(p,p+\delta]\right)+o(\delta)=\frac{\delta}{1-p}\sum_{e\in\cE}\bbP(e\text{ is $p$-pivotal})+o(\delta). 
\end{align*}
Dividing both sides by $\delta$ and taking $\delta\to0$ yields \eqref{eq:Russo} for the right derivative. 

We may proceed similarly to the left, expressing the derivative in terms of pivotal edges at $p-$ instead of $p$. To conclude, it remains to prove that the function $p\mapsto\sum_{e\in\cE}\bbP(e\text{ is $p$-pivotal})$ is left-continuous. As noted above, the sum is over a finite set of edges, so it suffices to prove that each summand is continuous. To see this, let $\cF$ be the a.s.\ finite set of edges $uv\in\cE$ such that there is a decreasing path for $p=1$ starting at a vertex $x$ with $\|x\|_1\le n$ and ending at $u\in\bbZ^d$. Then, for any edge $e$, we have
\[\bbP(e\text{ is $p$-pivotal})=\sum_F\bbP(\cF=F,e\text{ is $p$-pivotal}).\]
The event $\{e\text{ is $p$-pivotal},\cF=F\}$ depends only on $(\1_{U_f\le p})_{f\in F}$ and on the order of $(U_f)_{f\in F}$, of which there are finitely many and whose probability of occurrence is a polynomial in $p$. Therefore, the summands above are continuous and converge uniformly, as $\sum\bbP(\cF=F)=1$, so the series is continuous as desired.
\end{proof}

\begin{lemma}[Switching path]
\label{lem:path}
    Fix $U\in\Omega\cap(0,1)^\cE$, $p\in(0,1)$, and $e\in\cE$. We define $U^{e+}$ by $U^{e+}_f=U_f$ for all $f\in\cE\setminus\{e\}$ and $U_e^{e+}=0$. We similarly define $U^{e-}$ with $U_e^{e-}=1$ instead. Then $\omega_p(U)\in\{\omega_p(U^{e+}),\omega_p(U^{e-})\}$. Moreover, the set of $f\in\cE$ such that $\omega_p(U^{e+},f)\neq\omega_p(U^{e-},f)$ is a path containing $e$ with possibly coinciding endpoints, but no other self-intersections. We refer to this path as the \emph{switching path} of $e$ and denote it by $\cP_e$. If $\cP_e=x_1,\dots,x_m$ with $e=x_ix_{i+1}$, then the paths $x_1,\dots,x_i$ and $x_m,\dots,x_{i+1}$ are decreasing.
\end{lemma}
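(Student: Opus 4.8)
My plan is to run the two extreme processes $\omega_\cdot(U^{e+})$ and $\omega_\cdot(U^{e-})$ side by side: revealing every edge $f\ne e$ in increasing order of $U_f$ up to time $p$, with $e$ opened at time $0$ in the first process and never attempted before time $1>p$ in the second. Let $\zeta,\eta$ denote these two processes, call an edge \emph{$\zeta$-only} if it is open in $\zeta$ but not in $\eta$ (and symmetrically), let $D$ be the set of edges on which the two current configurations disagree, and set $\delta(v)=\deg_\zeta(v)-\deg_\eta(v)$. As noted in the model section, only finitely many coordinates of $U$ are ever consulted, so this is a legitimate finite procedure. I expect the inclusion $\omega_p(U)\in\{\omega_p(U^{e+}),\omega_p(U^{e-})\}$ to fall out at the end, so the real content is to describe $D$.

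The heart of the proof will be an inductive invariant preserved at each reveal: $D$ is a self-avoiding path through $e$, possibly a cycle through $e$ (``path with coinciding endpoints''); the edges of $D$ alternate between $\zeta$-only and $\eta$-only, with $e$ of the first kind; $\delta$ vanishes off the two endpoints of the path and equals $+1$ resp.\ $-1$ at an endpoint whose incident $D$-edge is $\zeta$-only resp.\ $\eta$-only; along each of the two segments of $D$ from $e$ to an endpoint the $U$-values increase away from $e$; and --- the clause that makes the induction close --- every interior vertex $v$ of the path has $\deg_\zeta(v)=\deg_\eta(v)=\kappa$. Once $D$ is a cycle it is frozen, since then no vertex has $\delta\ne0$ and every further edge is revealed identically in the two processes. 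The base case $D=\{e\}$ is immediate. For the step, a revealed edge $f=uv$ can change state only if $u$ or $v$ is an endpoint of $D$, say $u$ with $\delta(u)=+1$; a short check (using that $\deg_\eta(u)=\deg_\zeta(u)-1<\kappa$, so the discrepancy can only ``go one way'') shows $f$ cannot become $\zeta$-only, so the only admissible change is that $f$ becomes $\eta$-only --- which forces $\deg_\zeta(u)=\kappa$, turns $u$ into an interior $\kappa$-saturated vertex, and gives the new vertex $v$ the value $\delta(v)=-1$. In other words the arm of $D$ through $u$ grows by one edge; the case $\delta(u)=-1$ is symmetric under exchanging $\zeta\leftrightarrow\eta$. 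Granting the invariant, clauses about alternation, the $\delta$-values and the $U$-monotonicity along the arms are then automatic (arm edges are revealed in increasing order), and ``$U$ increasing from $e$ outwards'' is precisely the asserted decreasing property of $x_1,\dots,x_i$ and $x_m,\dots,x_{i+1}$.

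The step I expect to be the main obstacle is ruling out the single way this picture could break: that the arm grows into a vertex $v$ that already lies in the \emph{interior} of $D$, producing a branched configuration rather than a path. This is exactly what the saturation clause forbids: an interior vertex $v$ became interior at an arm-growth step, which by the computation above left it with $\deg_\zeta(v)=\deg_\eta(v)=\kappa$, and neither degree can subsequently change, since an incident edge opening in both processes would exceed $\kappa$, while one opening in a single process would give $v$ a third $D$-edge, contradicting the invariant earlier. Hence $\deg_\eta(v)=\kappa$ for interior $v$, so the condition $\deg_\eta(v)<\kappa$ needed for the arm to reach $v$ fails, and $f$ is not added. The only remaining way $D$ could lose simplicity is that the arm reaches the \emph{other} endpoint; here I will use that $\bbZ^d$ is bipartite: two distinct endpoints are adjacent only if the $D$-path between them has odd length, which by the conservation law $\delta(x_1)+\delta(x_m)=2(|\zeta|-|\eta|)$ forces them to have equal sign, and then the reveal merely closes $D$ into an even cycle through $e$ and freezes it. Finally, each arm of $D$ is a path with strictly monotone $U$-values, hence finite in $\Omega$; this finite path is the switching path $\cP_e$, with the claimed decreasing arms.

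It remains to deduce $\omega_p(U)\in\{\omega_p(U^{e+}),\omega_p(U^{e-})\}$. If $e$ is $p$-closed in $\omega_p(U)$ --- whether because $U_e>p$ or because its attempt at time $U_e$ fails --- then revealing edges in increasing order for $U$ and for $U^{e-}$ keeps the two configurations identical, since an absent or failed attempt at $e$ changes nothing, so $\omega_p(U)=\omega_p(U^{e-})$. If $e$ is $p$-open in $\omega_p(U)$, I compare instead the $U^{e+}$- and $U$-processes: they agree with the $U^{e+}$-versus-$U^{e-}$ comparison up to time $U_e$, so at time $U_e^-$ their symmetric difference is $\cP_e$; but for $e$ to succeed in the $U$-process at time $U_e$ the endpoints of $e$ must then have degree $<\kappa$, which by the saturation clause is possible only if neither arm of $\cP_e$ has grown, i.e.\ $\cP_e=\{e\}$ at that moment. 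The instant $e$ opens in the $U$-process the symmetric difference becomes empty and remains so, whence $\omega_p(U)=\omega_p(U^{e+})$.
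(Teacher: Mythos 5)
Your proof is correct and follows essentially the same route as the paper's: you track the symmetric difference of the $U^{e+}$- and $U^{e-}$-processes in increasing order of the $U$-values and maintain the same inductive invariant (alternating disagreement edges, interior vertices saturated at degree $\kappa$ so the path can only grow at its endpoints, and bipartiteness to control closure into a cycle), which is exactly the paper's induction over the disagreement edges $e_0,\dots,e_k$ ordered by $U$. The only cosmetic difference is that you deduce $\omega_p(U)\in\{\omega_p(U^{e+}),\omega_p(U^{e-})\}$ at the end from the saturation clause, whereas the paper establishes it first by a direct comparison of the processes up to time $U_e$.
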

\begin{proof}
Since in $\Omega$ there are only finite decreasing paths, it is clear that $\cE_e=\{f\in\cE:\omega_p(U^{e+},f)\neq\omega_p(U,f)\}$ is finite. First, assume that $e\not\in\cE_e$, that is, $\omega_p(U,e)=1$. Then, at any time $p'\le U_e$, the degree of both endpoints of $e$ in $\omega_{p'}(U)$ is at most $\kappa$. But then, by induction on the number of attempted updates at edges in a monotone path containing an endpoint of $e$, we have $\omega_{p'}(U,f)=\omega_{p'}(U^{e+},f)$ for all $f\neq e$ and $p'\le U_e$. Thus, $\omega_{U_e}(U)=\omega_{U_e}(U^{e+})$ and the Markovian construction of the process guarantees that $\omega_{p'}(U)=\omega_{p'}(U^{e+})$ for all $p'\ge U_e$. Applying this to $p'=p$ concludes this case.

Now assume that $\omega_p(U,e)=0$, so $e\in\cE_e$. Since $U_f\1_{U_f<U_e}=U^{e-}_f\1_{U_f^{e^-}<U_e}$ for all $f\in\cE$, this gives $\omega_{U_e}(U)=\omega_{U_e}(U^{e-})$. By the Markov property, we have $\omega_p(U)=\omega_p(U^{e-})$.

For the rest of the proof, we may assume that $U=U^{e-}$. Notice that, since each edge attempts to open once, we have $\omega_{p'}(U,f)=\omega_{p'}(U^{e+},f)$ for all $f\in\cE\setminus\cE_e$ and $p'\le p$. Similarly, for $f\in\cE_e$ we have $\omega_{p'}(U,f)=\omega_{p'}(U^{e+},f)$ if and only if $p'<U^{e+}_f$. 

Set $\cE_e=\{e_0,\dots,e_k\}$ with $e_0=e\in\cE_e$  and $U_{e_{i+1}}>U_{e_i}$ for $i\in\{1,\dots,k-1\}$. Also, let $p_i=U^{e+}_{e_i}$ for $i\in\{0,\dots,k\}$. We prove by induction on $i\in\{0,\dots,k\}$ that the edges $\cE_i=\{e_0,\dots,e_i\}$ form a path $\cP_i$ as in the statement, and moreover satisfy:
    \begin{itemize}
        \item consecutive edges in $\cP_i$ have different values of $\omega_p(U^{e+},f)$,
        \item all internal vertices\footnote{We do not view the endpoint of a path with coinciding endpoints as an internal vertex.} of $\cP_i$ have degree $\kappa$ in $\omega_p(U^{e+},f)$.
    \end{itemize}
    
    The base case is immediate. Assume the statement holds for some $i<k$. Then, for $p'\in[p_i,p_{i+1})$, the only vertices with different degrees in $\omega_{p'}(U)$ and $\omega_{p'}(U^{e+})$ are the endpoints of $\cP_i$. If the path has coinciding endpoints, then all vertices have the same degree ($\bbZ^d$ has no odd cycles), so $i=k$, which is a contradiction. Clearly, $e_{i+1}$ has to be incident with at least one endpoint of $\cP_i$, which forms a longer path respecting monotonicity. The fact that $e_{i+1}$ cannot create a self-intersection other than completing a cycle follows from the fact that internal vertices of $\cP_i$ already have degree $\kappa$. Moreover, since $\omega_{p_{i+1}}(U,e_{i+1})\neq\omega_{p_{i+1}}(U^{e+},e_{i+1})$, at least one endpoint $v$ of $\cP_i$ contained in $e_{i+1}$ has degree $\kappa$ in one of $\omega_{p_{i+1}-}(U),\omega_{p_{i+1}-}(U^{e+})$. For concreteness, let $\deg v=\kappa$ in $\omega_{p_{i+1}-}(U)$ (the other case is analogous). Then $\omega_{p_{i+1}}(U,e_{i+1})=0$, so $\omega_{p_{i+1}}(U^{ e+},e_{i+1})=1$, $\omega_{p_i}(U,e_j)=1$ and $\omega_{p_i}(U^{e+},e_j)=0$, where $e_j$ is the edge of $\cP_i$ containing $v$. Thus, $\deg v=(\kappa-1)+1$ in $\omega_{p_{i+1}}(U^{e+})$ and $\cP_{i+1}$ is still alternating.
\end{proof}

\begin{proposition}[Pivotality transfer]
\label{prop:pivotality:transfer}
    There exists $C=C(p,d)>0$, uniformly bounded over compacts of $(0,1)\times\bbN$, such that
    \[\sum_{e\in\cE}\bbP(e\text{ is $U$-pivotal})\le C\sum_{e\in\cE}\bbP(e\text{ is $p$-pivotal}).\]
\end{proposition}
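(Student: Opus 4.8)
The plan is to transfer $U$-pivotality to $p$-pivotality by a deterministic local surgery centred on the switching path, in which the merely exponential cost of a configuration change is beaten by the super-exponential improbability of a long switching (or decreasing) path. Fix an edge $e$ and a full-measure configuration $U$ for which $e$ is $U$-pivotal. By Lemma~\ref{lem:path} the switching path $\cP_e$ is a path through $e$, possibly with coinciding endpoints, whose two arcs from the endpoints of $e$ are decreasing, and varying $U_e$ changes the open/closed configuration only through the edges of $\cP_e$. Since $e$ is $U$-pivotal, some values of $U_e$ put $U$ in $E_n(p)$ and others do not. On $\{e\text{ is }U\text{-pivotal}\}$ I would record a bounded ``shape'': the path $\cP_e=P$, a witnessing head--tail path $(x_0,\dots,x_l,\dots,x_k)$ for $E_n(p)$ in whichever state of $\cP_e$ realises the event, and the position of $e$ along $P$ and along this witness. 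This splits the event according to whether toggling $\cP_e$ affects $E_n(p)$ through the open head or through the decreasing tail of the witness, the degenerate cases being grouped with the tail regime.

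In the head regime, with the shape as input, I would build a modification $U\mapsto U'$ supported on $\cP_e$ together with the edges incident to its vertices --- a set of $O(|\cP_e|)$ edges --- so that $\cP_e$ is still the switching path of $e$, the endpoints of $e$ have degree $<\kappa$ in $\omega_p(U')$, $U'_e>p$, and $E_n(p)$ fails for $U'$: the edges of $\cP_e$ that the toggle of $U_e$ makes open are rendered useless because the edges hanging off $\cP_e$ have been closed. Lowering $U'_e$ to $p$ then opens $e$ and restores the witnessing path, with $e$ as its last open head edge; hence $e$ is $p$-pivotal for $U'$. The core estimate is the cost/gain balance. Forcing the $O(|\cP_e|)$ uniforms into prescribed relative positions changes the probability by at most $e^{O(|\cP_e|)}$; but $\{\cP_e=P\}$ already forces the two arcs of $P$ to be decreasing, which is super-exponentially unlikely in the length $m$ of $P$, and there are only $(2d)^{O(m)}$ paths of length $m$ through $e$. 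A factorial beats every exponential, so the sum over $P$ and over $e$ converges; the super-exponential smallness of long monotone walks --- the estimate behind $\bbP(\Omega)=1$ --- is what lets us treat the surgery as local and guarantees that $e$ is $p$-pivotal for $U'$ whatever the untouched part of $U$ is. Finally, making the shape recoverable from $U'$ (so that $P$ is read off as a decreasing path and $e$ from its recorded position) makes the surgery bounded-to-one, and summing the per-shape estimates gives $\sum_{e}\bbP(e\text{ is }U\text{-pivotal})\le C\sum_{f}\bbP(f\text{ is }p\text{-pivotal})$, with $C$ uniform over $n\in\bbN$ and over $p$ in compact subsets of $(0,1)$.

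In the tail regime, $e$ lies along the decreasing path running from the vertex $x_l$ where the open head meets the tail down to the origin, and the switching-path surgery alone does not suffice. I would additionally modify $U$ on a bounded neighbourhood of this decreasing path, re-routing and re-ordering its uniform values so that, for $U'$, the origin is reached from $x_l$ by a valid decreasing tail whose first edge is $e$ exactly when $U'_e\le p$: with $U'_e>p$ the event $E_n(p)$ fails, with $U'_e=p$ it holds, so $e$ is $p$-pivotal. The accounting is unchanged, since a decreasing path of length $m$ has probability at most $C^m/m!$ and there are at most $(2d)^m$ of them; the only new feature is that two path-like objects --- the switching path and the decreasing path to the origin --- must be modified compatibly, and since the head feeding the tail may itself overlap $\cP_e$, the case analysis becomes finer.

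I expect the hardest step to be verifying that after the surgery the target edge is \emph{exactly} $p$-pivotal. Without finite energy a single altered uniform can cascade, so one must check that the modifications near $\cP_e$ (and near the decreasing path, in the tail regime) genuinely destroy every competing open and decreasing route and create none, and that this holds uniformly over the untouched part of the configuration --- precisely where the super-exponential smallness of long monotone walks enters. The second substantial point is to design the shape to be simultaneously bounded (uniformly in $n$, with constants locally bounded in $p$, which is the source of the stated uniformity of $C$) and recoverable from $U'$, so that the surgery map is bounded-to-one. Granting these, the proposition follows by assembling the per-shape bounds.
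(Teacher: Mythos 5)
Your overall strategy is the same as the paper's (switching path, a head/tail dichotomy, and a factorial-beats-exponential accounting), but two of your key steps fail as described. In the head regime, you cannot in general arrange for $e$ itself to become $p$-pivotal: within the locally modified configurations, the occurrence of $E_n(p)$ is a function of the open/closed states of \emph{all} the edges of $\cP_e$ (toggling $U_e$ acts on the event through the other switching-path edges), and it may not depend on the state of $e$ at all; moreover, in the head case there is no decreasing path from $\cV_e$ to the origin, so a witness can never have ``$e$ as its last open head edge''. Also, ``closing the edges hanging off $\cP_e$'' would sever the open connection from the sphere that you need for the occurring direction; one must keep the hanging edges that are open in $\omega^+$ and only freeze the already closed ones. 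The paper resolves this not by targeting $e$ but by an existence argument: after fixing the local environment (Claim~\ref{claim:unchanged:config}), the all-closed assignment on $\cP_e$ fails (Claim~\ref{claim:does:not:occur}) and the all-open one occurs (Claim~\ref{claim:occurs}), so a maximal subset $\cF\subsetneq\cE_+$ yields \emph{some} $p$-pivotal edge $f\in\cE_+\setminus\cF$ on the switching path; this is the mechanism your write-up is missing.

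In the tail regime the gap is more serious: ``the accounting is unchanged'' is false for your construction. To make $e$ itself $p$-pivotal you force the witness's tail to be a decreasing path from $e$ all the way to the origin, i.e.\ you impose an \emph{ordering} on roughly $\delta(e,0)$ resampled uniforms. Such a cylinder has probability of order $p^{s}/s!$, so the change-of-measure factor is $s!$, exactly cancelling the factorial gain from the event $\cM$, and the remaining exponential per-shape factor cannot be summed against the exponentially many shapes. (Your alternative of keeping the original tail values destroys the independence between the ``gain'' event and the ``pivotality'' event that pays for the shape entropy, and the bounded-to-one/recoverability claim you invoke instead is unsubstantiated --- only coarse per-edge conditions $U'_f\le p$ or $U'_f>p$ can be forced at exponential cost, and from those the shape is not recoverable.) The same misconception appears in your head-regime estimate that forcing ``prescribed relative positions'' of $O(|\cP_e|)$ uniforms costs only $e^{O(|\cP_e|)}$: an ordering costs a factorial, which is precisely why the paper only ever forces product-type conditions. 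The paper's Case~B circumvents all of this by \emph{not} making $e$ pivotal: it targets the fixed edge $0e_1$ at the origin, converts the long decreasing stretch into an open head using only $\le p$/$>p$ conditions inside a thickened neighbourhood $\bar\cV$, closes every other edge at the origin so that failure of $E_n(p)$ is automatic when $U'_{0e_1}>p$ (Corollary~\ref{cor:p-pivotal}), and then sums over $e$ using the resulting $2^{-\delta(e,0)}$ decay. This relocation of the pivotal edge to the origin is the essential idea your proposal lacks, so as written the argument does not go through.
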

\begin{proof}
Consider a $U$-pivotal edge $e$ for the event $E_n(p)$. Let $U^+\in E_n(p)$ with $U^+_f=U_f$ for all $f\in\cE\setminus\{e\}$. Similarly, define $U^-\in\Omega\setminus E_n(p)$ with $U_f^-=U_f$ for all $f\in\cE\setminus\{e\}$. We assume $U^+$ and $U^-$ (and further functions of $U$ below) to be selected in some measurable way as a function of $U$. Let $\omega^+_f=\omega_p(U^+,f)$ for all $f\in\cE$. For clarity, we denote by $\cV_e$ and $\cE_e$ the vertex and edge sets of the switching path $\cP_e$ (recall Lemma~\ref{lem:path}), respectively. We distinguish two cases.
\medskip

\noindent\textbf{Case A} (Head)\textbf{.} Assume that there is no decreasing path from a vertex in $\cV_e$ to the origin in $U^+\in E_n(p)$. We seek to modify the configuration in the neighbourhood of $\cV_e$ in order to make a $p$-pivotal edge appear. We say that a configuration $U'\in\Omega$ is \emph{good}, if 
\begin{itemize}
    \item $U'_f>p$ for all $f\in\cE$ intersecting $\cV_e$ and such that $\omega^+_f=0$ and
    \item $U'_f=U_f$ for all remaining $f\in\cE\setminus\cE_e$.
\end{itemize}
\begin{claim}
\label{claim:unchanged:config}
Any good configuration $U'$ satisfies $\omega_p(U',f)=\omega^+_f$ for all $f\in\cE\setminus\cE_e$, and $\omega_p(U',f)=\1_{U'_f\le p}$ for $f\in\cE_e$ (also see Lemma 1 in \cite{Arcanjo26}).
\end{claim}
\begin{proof}
Using Lemma~\ref{lem:path} successively for each $f\in\cE$ intersecting $\cV_e$ and such that $\omega^+_{f}=0$, we obtain that $\omega^+_{\cdot}=\omega_{p}(U',\cdot)$ for any good configuration $U'$ with $U'_{f}=U^+_f$ for all $f\in\cE_e$ with $\omega^+_f=1$. 
Yet, by construction, for any $v\in\cV_e$ and any good configuration $U''$, the number of edges $uv$ such that $U''_{uv}\le p$ is at most the degree of $v$ in $\omega^+$, which is at most $\kappa$. Therefore, in good configurations, the vertices in $\cV_e$ are unconstraint. In particular, changing $U'_f$ for $f\in\cE_e$ with $\omega^+_f=1$ does not change $\omega_p(U',g)$ for $g\neq f$.
\end{proof}

\begin{claim}
\label{claim:does:not:occur}
    No good configuration $U'$ with $U'_f>p$ for all $f\in\cE_e$ belongs to $E_n(p)$.
\end{claim}
\begin{proof}
    By Claim~\ref{claim:unchanged:config}, such configurations satisfy $\omega_p(U',f)=\omega^+_f\1_{f\not\in\cE_e}=\omega_p(U^-,f)\1_{f\not\in\cE_e}$, using Lemma~\ref{lem:path} in the second equality. Since a path witnessing $U'\in E_p(n)$ cannot use edges $f\in\cE$ with $U'_f>p$ by Definition~\ref{def:Enp}, such a path would also witness $U^-\in E_n(p)$, which contradicts the definition of $U^-$.
\end{proof}

\begin{claim}
\label{claim:occurs}
    Any good configuration $U'$ with $U'_f\le p$ for all $f\in\cE_e$ with $\omega^+_f=1$ belongs to $E_n(p)$.
\end{claim}
\begin{proof}
    By Claim~\ref{claim:unchanged:config}, we have that $\omega^+_f=\omega_p(U',f)$ for any $f\in\cE$ and such good configuration $U'$. Fix a path witnessing $U^+\in E_n(p)$. By the assumption of Case~A, its tail does not intersect $\cV_e$. Since $U'_f=U^+_f$ for all $f\in\cE$ disjoint from $\cV_e$ 
    by construction, the same path witnesses $U'\in E_n(p)$.
\end{proof}

Let $\cE_+=\{f\in\cE_e:\omega^+_f=1\}$. Let $\cF$ be a maximal subset of $\cE_+$, satisfying that no good configuration $U'$ with $\omega_p(U',f)=0$ for $f\in\cE_+\setminus\cF$ belongs to $E_n(p)$. By Claims~\ref{claim:does:not:occur} and~\ref{claim:occurs}, $\cF$ exists and $\cF\neq\cE_+$. Moreover, by the assumption of Case~A, no witness of a good configuration in $E_n(p)$ can have a tail intersecting $\cV_e$. Therefore, recalling Claim~\ref{claim:unchanged:config}, within good configurations $U'$, the event $U'\in E_n(p)$ depends only on $\1_{U'_f\le p}$ for $f\in\cE_+$. Therefore, for any good configuration $U'$ such that $\1_{U'_f\le p}=\1_{f\in\cF}$ for all $f\in\cE_+$, we have that each $f\in\cE_+\setminus \cF$ is $p$-pivotal for $U'$ for the event $E_n(p)$ (recall Definition~\ref{def:pivotal}) and, in particular, such $f$ exist.

Given sets $F\subset P'\subset\cE$ and an edge $f\in P'\setminus F$, we define the event $\cN(f,F,P')\subset\Omega$ so that $U\in\cN(f,F,P')$, if, for all $U'\in \Omega\cap [0,p]^{F}\times(p,1]^{P'\setminus F}\times\prod_{g\in\cE\setminus P'}\{U_{g}\}$, it holds that $f$ is $p$-pivotal for $E_n(p)$ in $U'$. We just proved that if $e$ is $U$-pivotal and Case A occurs, then there exists $f\in \cE_+\setminus\cF\subset\cE_e\setminus \cF$ such that $\cN(f,\cF,\cP')$ occurs, where $\cP'=\cE_e\cup\{g\in\cE:g\cap\cV_e\neq\varnothing,\omega^+_g=0\}$.

Consider a path $P=x_1,\dots,x_m$ containing $e=x_ix_{i+1}$ with possibly coinciding endpoints, and write $|P|=m-1$ for its length. We denote by $\vec P\in\{(x_1,\dots,x_{i}),(x_m,\dots,x_{i+1})\}$ the longest subpath of $P$ not containing $e$. Thus, $|\vec P|\ge \lfloor|P|/2\rfloor$. Recall that $\cM(\vec P)$ denotes the event that $\vec P$ is decreasing.

From the above, we have
\begin{align*}\bbP(e\text{ is $U$-pivotal, Case A})&{}= \sum_{P,P',F}\bbP(\cP_e=P,\cP'=P',\cF=F,e\text{ is $U$-pivotal, Case A})\\
&{}\le\sum_P\sum_{P'}\sum_F\sum_f\bbP(\cM(\vec P),\cN(f,F,P')),\end{align*}
where the sums are over paths $P$ containing $e$, sets $P'\supset P$ of edges with at least one endpoint in $P$, edge sets $F\subsetneq P$ and edges $f\in P\setminus F$. Noticing that $\cM(\vec P)$ and $\cN(f,F,P')$ are measurable with respect to the restriction of $U$ to $\vec P\subset P\subset P'$ and to $\cE\setminus P'$ respectively, we get that these events are independent. From the definitions we clearly have $\bbP(\cM(\vec P))=p^{|\vec P|}/|\vec P|!$ and $\bbP(\cN(f,F,P'))\le \bbP(f\text{ is $p$-pivotal})/(p^{|F|}(1-p)^{|P'|-|F|})$. Hence,
\begin{align*}
\bbP(e\text{ is $U$-pivotal, Case A})&{}\le \sum_P \sum_{f\in P}\frac{2^{2d(|P|+1)}2^{2d(|P|+1)}}{\lfloor |P|/2\rfloor!(p(1-p))^{2d(|P|+1)}}\bbP(f\text{ is $p$-pivotal})\\
&{}\le \sum_P\sum_{f\in P}\frac{C_1}{|P|^{|P|/3}}\bbP(f\text{ is $p$-pivotal})\le \sum_{f\in \cE}\frac{C_2}{2^{\delta(e,f)}}\bbP(f\text{ is $p$-pivotal}),\end{align*}
for some $C_1=C_1(p,d)>0$ and $C_2=C_2(p,d)>0$ bounded uniformly on compacts of $(0,1)\times\bbN$, where $\delta(e,f)$ denotes the $\ell^1$ distance between the edges $e$ and $f$. Indeed, in the last inequality, we used that the number of paths of given length in $\bbZ^d$ is exponential, while $|P|^{-|P|/3}$ decays super-exponentially. Summing over $e$, we obtain
\begin{equation}
\label{eq:Case:A}\sum_{e\in\cE}\bbP(e\text{ is $U$-pivotal, Case A})\le \frac{C}{2}\sum_{f\in\cE}\bbP(f\text{ is $p$-pivotal}).\end{equation}

\medskip

\noindent\textbf{Case B} (Tail)\textbf{.} 
Assume there exists a decreasing path from $\cV_e$ to the origin in $U^+$ (equivalently, in $U$). We will proceed similarly to Case A, but modifying the configuration in the neighbourhood of both $\cV_e$ and one such decreasing path. Consider a path $\cP_0$ witnessing $U^+\in E_n(p)$. Recalling Definition~\ref{def:Enp} and Lemma~\ref{lem:path}, we get that it intersects $\cV_e$, since otherwise $e$ cannot be $U$-pivotal. Let $\cP=x_1,\dots,x_k,\dots,x_l,\dots,x_m$, with $x_m=0$, be a walk possibly self-intersecting at vertices, but not edges, defined as follows: follow $\cP_0$ from the beginning until it first intersects $\cV_e$ at a vertex $x_k\in\cV_e$, then follow $\cP_e$ to a vertex $x_l\in\cV_e$ and then follow a decreasing path from $x_l$ to the origin $0=x_m$ not visiting $\cV_e$ again. Indeed, a suitable vertex $x_l$ exists by the assumption of Case B and, if its decreasing path to the origin intersects $x_1,\dots,x_k$ at an edge, erasing the resulting loop would give a witness of $U^+\in E_n(p)$ not visiting $\cV_e$, which is not possible, as argued above.

Let $j=\max\{i\in\{1,\dots,k\}:x_i\text{ is in the head of }\cP_0\}$. Also define $\cV=\{x_j,\dots,x_m\}\cup\cV_e$ and set $\bar\cV=\{x\in\bbZ^d:\delta(x,\cV)\le 5\}$, $\mathring\cV=\{x\in\bar \cV:\delta(x,\cV)\le 4\}$ and $\partial\cV=\bar\cV\setminus\mathring\cV$.  Let $i=\min\{h\in\{1,\dots,j\}:x_h\in\mathring\cV\}$. We fix a path $\cP'=y_0,\dots,y_r$ from $x_i$ to $e_1=(1,0,\dots,0)$ contained in $\mathring\cV\setminus\{0\}$. Indeed, $\mathring\cV$ is connected, as it is the sum of two connected sets (a $\delta$-ball of radius $4$ and $\cV$, which is the union of the vertex sets of the intersecting path $\cP_e$ and walk $x_j,\dots,x_m$) and $\{x\in\bbZ^d:\delta(x,0)=1\}\subset\mathring\cV$ is also connected. Let $\cE'$ denote the edge set of $\cP'$. We call a configuration $U'\in\Omega$ \emph{good}, if it satisfies the following.
\begin{enumerate}
    \item\label{item:i} $U'_f=U_f$ if $f\cap \bar \cV=\varnothing$,
    \item\label{item:ii} $U'_f=U_f$ if $|f\cap \bar\cV|=1$ and $\omega^+_f=1$,
    \item\label{item:iii} $U'_f>p$ if $|f\cap\bar\cV|=1$ and $\omega^+_f=0$,
    \item\label{item:iv} $U'_f=U_f$ if $f\subset\partial\cV$ and $f=x_ax_{a+1}$ for some $a\in\{1,\dots,i-2\}$,
    \item\label{item:v} $U'_f>p$ if $f\subset \partial\cV$ and $f\not\in\{x_ax_{a+1}:a\in\{1,\dots,i-2\}\}$,
    \item\label{item:vi} $U'_f=U_f$ if $f=x_{i-1}x_i$,
    \item\label{item:vii} $U'_f\le p$ if $f\in \cE'$,
    \item\label{item:viii} $U'_f>p$ if $f\in \cE\setminus(\cE'\cup\{x_{i-1}x_i\})$, $f\cap\mathring\cV\neq\varnothing$.
\end{enumerate}

\begin{lemma}
\label{lem:good:B}
    Let $U'$ be a good configuration. For any $f\in\cE$ with $|f\cap\bar\cV|\le 1$, we have $\omega_p(U',f)=\omega^+_f$. For any $f\in\cE$ with $f\subset\bar\cV$, we have $\omega_p(U',f)=\1_{U'_f\le p}$.
\end{lemma}
\begin{proof}
The statement holds trivially for edges in \ref{item:iii}, \ref{item:v}, and \ref{item:viii}. The vertices $v\in\mathring\cV$ have at most two incident edges $uv$ with $U'_{uv}\le p$ by construction, so the edges in \ref{item:vii} (these are all edges $f\subset \mathring\cV$ with $U'_f\le p$) indeed satisfy $\omega_p(U',uv)=1$. On the other hand, vertices $v\in\partial \cV$ satisfy that all $uv\in\cE$ with $U'_{uv}\le p$ also satisfy $\omega^+_{uv}=1$, so there are at most $\kappa$ of them. Therefore, the edges $f$ in \ref{item:iv} and \ref{item:vi} also satisfy $\omega_p(U',f)=1$. 

It remains to show that $\omega_p(U^+,f)=\omega_p(U',f)$ for all $f$ in \ref{item:i} and \ref{item:ii}. To do this, denote by $f_1,\dots,f_s$ the edges in \ref{item:ii} ordered so that $a\mapsto U_{f_a}$ is increasing. Setting $U_{f_0}=0$ and $U_{f_{s+1}}=p$, we show by induction on $a\in\{0,\dots,s+1\}$ that $\omega_{p'}(U',f)=\omega_{p'}(U^+,f)$ for all $p'\le $ and $f\in\cE$ such that $|f\cap\bar\cV|\le 1$. The base case is trivial. Assume the induction statement is true for some $a\in\{0,\dots,s\}$. Then, for all $p'<U_{f_{a+1}}$ and $f\in\cE$ with $|f\cap\bar\cV|=1$, it holds that $\omega_{p'}(U',f)=\omega_{U_{f_a}}(U',f)=\omega_{U_{f_a}}(U^+,f)=\omega_{p'}(U^+,f)$. Since this edge set separates the edges in \ref{item:i} from the remaining edges (recall that $\cV_e\subset\mathring\cV$ and that, by Lemma~\ref{lem:path}, $\omega_{p'}(U^+,f)=\omega_{p'}(U,f)$ for all $p'\le p$ and $f\in\cE\setminus\cE_e$), we get that $\omega_{p'}(U',f)=\omega_{p'}(U^+,f)$ for all $p'<U_{f_{a+1}}$ and $f\in\cE$ with $|f\cap\bar\cV|\le 1$. If $a=s$, we are done. Otherwise, it remains to prove that $\omega_{U_{f_{a+1}}}(U',f_{a+1})=\omega_{U_{f_{a+1}}}(U^+,f_{a+1})=1$. This holds since the vertices in $\bar\cV$ are unconstraint in $U'$ as noted above. This completes the proof of the induction and the lemma.
\end{proof}

\begin{corollary}
\label{cor:p-pivotal}
    The edge $0e_1$ is $p$-pivotal for any good configuration.
\end{corollary}
\begin{proof}
    Fix a good $U'$. Notice that, by construction, all edges containing $0$ are in \ref{item:viii}, so $U'\not\in E_n(p)$ by Definition~\ref{def:Enp}. Moreover, there is exactly one edge of the form $xe_1$ with $U_{xe_1}\le p$, so it suffices to show that, in $U'$, each edge $f$ in the path $x_1,\dots,x_i,y_1,\dots,y_r$ satisfies $\omega_p(U',f)=1$. Inspecting the definition of good configurations, Lemma~\ref{lem:good:B} completes the proof. Indeed, the edges $f$ in $x_1,\dots,x_i$ are in the head of $\cP_0$ by definition of $j$ and $i$, so they satisfy $\omega^+_f=1$ and $U'_f=U_f\le p$.
\end{proof}

With Corollary~\ref{cor:p-pivotal} at hand, we conclude essentially as in Case A, but we spell out the details for the reader's convenience. Given disjoint edge sets $E_+,E_-\subset\cE$, we define the event $\cN(E_+,E_-)$ so that $U\in\cN(E_+,E_-)$, if, for all $U'\in\Omega\cap[0,p]^{E_+}\times(p,1]^{E_-}\times\prod_{f\in\cE\setminus(E_+\cup E_-)}\{U_f\}$, it holds that $0e_1$ is $p$-pivotal in $U'$. By Corollary~\ref{cor:p-pivotal}, if $e$ is $U$-pivotal and Case B occurs, then $\cN(\cE_+,\cE_-)$ occurs, where $\cE_+$ are the edges in \ref{item:vii} and $\cE_-$ are the edges in \ref{item:iii}, \ref{item:v} and \ref{item:viii}. Notice that all edges $f\in\cE$ with $f\subset\mathring \cV$ are either in \ref{item:vii} or \ref{item:viii}, so they belong to $\cE_+\cup\cE_-$.

Given $U$, we denote by $\vec \cP$ the longest (breaking ties arbitrarily) path among the following four paths:
\begin{itemize}
    \item $x_j,\dots,x_k$;
    \item $x_l,\dots,x_m$;
    \item the part of $\cP_e$ up to $e$ excluded;
    \item the part of $-\cP_e$ up to $e$ excluded, where $-\cP_e$ denotes $\cP_e$ in reverse order.
\end{itemize}
Notice that the union of the vertex sets of these paths is $\cV$, so $|\vec\cP|\ge (|\cV|-2)/4$. Moreover, $\cM(\vec\cP)$ occurs, since each of the four paths is decreasing by construction.

Thus, we get 
\begin{align*}
\bbP(e\text{ is $U$-pivotal, Case B})&{}=\sum_{V}\sum_{E_+,E_-,\vec P}\bbP(\cV=V,\cE_+=E_+,\cE_-=E_-,\vec\cP=\vec P,e\text{ is $U$-pivotal, Case B})\\
&{}\le \sum_V\sum_{E_+,E_-,\vec P}\bbP(\cM(\vec P),\cN(E_+,E_-)),\end{align*}
where the sums are over 
\begin{itemize}
    \item finite connected vertex sets $V\subset\bbZ^d$ containing $e$ and $0$,
    \item disjoint edge sets $E_+,E_-\subset\cE$ whose elements are contained in $\{x\in\bbZ^d:\delta(x,V)\le 6\}$ and satisfying that all $f\in\cE$ such that $f\subset\{x\in\bbZ^d:\delta(x,V)\le 4\}$ satisfy $f\in E_+\cup E_-$,
    \item paths $\vec P$ with vertex set contained in $V$ and satisfying $|\vec P|\ge (|V|-2)/4$.
\end{itemize}

Observing that the edges of $\vec P$ are contained in $E_+\cup E_-$, we get that $\cM(\vec P)$ and $\cN(E_+,E_-)$ are independent. By definition, we have $\bbP(\cM(\vec P))=p^{|\vec P|}/|\vec P|!$ and $\bbP(\cN(E_+,E_-))\le \bbP(0e_1\text{ is $p$-pivotal})/(p^{|E_+|}(1-p)^{|E_-|})$. Hence,
\begin{align*}
\bbP(e\text{ is $U$-pivotal, Case B})&{}\le \sum_V\frac{2^{C_4|V|}}{\lceil(|V|-2)/4\rceil!(p(1-p))^{C_4|V|}}\bbP(0e_1\text{ is $p$-pivotal})\\
&{}\le C_52^{-\delta(e,0)}\bbP(0e_1\text{ is $p$-pivotal})\end{align*}
for some $C_4=C_4(d)>0$ and $C_5=C_5(d,p)>0$, uniformly bounded over compacts of $(0,1)\times\bbN$, taking into account the fact that the number of connected sets of a given size containing 0 grows exponentially. Summing over $e$ yields
\[\sum_{e\in\cE}\bbP(e\text{ is $U$-pivotal, Case B})\le \frac C2\bbP(0e_1\text{ is $p$-pivotal}).\]
Combining this with \eqref{eq:Case:A}, concludes the proof of Proposition~\ref{prop:pivotality:transfer}.
\end{proof}

\subsection{The OSSS inequality}
\label{sec:OSSS}

We briefly recall the OSSS inequality in the context of product probability spaces. Let \( I \) be a countable index set, and consider the product space \( (X^I, \pi^{\otimes I}) \), where a typical element is denoted by \( x = (x_i)_{i \in I} \). We are interested in Boolean functions \( f : X^I \to \{0,1\} \), which may depend on infinitely many coordinates of \( x \).

An \emph{algorithm} \( \mathbf{A} \) determining \( f \) reveals the coordinates of \( x \) sequentially, with each choice depending on the values revealed thus far. The process terminates once the value of \( f(x) \) is determined, independently of the unrevealed coordinates. For a formal definition of randomized algorithms, we refer to~\cite{ODonnell05}. Associated to \( \mathbf{A} \) and \( f \), we define the \emph{revealment} and \emph{influence} of coordinate \( i \in I \) by
\[
\delta_i(\mathbf{A}) := \pi^{\otimes I}\left(\mathbf{A} \text{ reveals } x_i\right), \quad 
\mathrm{Inf}_i(f) := \pi^{\otimes I}\left(f(x) \neq f(x^i)\right),
\]
where \( x^i \) is obtained from \( x \) by resampling the \( i \)-th coordinate independently according to \( \pi \), leaving the others unchanged. The OSSS inequality~\cite{ODonnell05} states that for any Boolean function \( f : X^I \to \{0,1\} \) and any algorithm \( \mathbf{A} \) that determines \( f \),
\begin{equation}\label{OSSS_in}
\mathrm{Var}(f) \leq \sum_{i \in I} \delta_i(\mathbf{A}) \cdot \mathrm{Inf}_i(f).
\end{equation}

This inequality was initially formulated for finite sets $X$ and $I$. Nonetheless, the result remains valid when $X$ is a general space and $I$ is countable (see \cite{Duminil-Copin19a}*{Remark 5}). Applying \eqref{OSSS_in} to $X=[0,1]$, $\pi$ the Lebesgue measure over $X$ and $I=\cE$, so that $\pi^{\otimes I}=\bbP$, and the function $f=\1_{E_n(p)}$, and setting $\delta(\mathbf{A})=\max_{e\in\cE}\delta_e(\mathbf{A})$, we obtain
\begin{equation}
\label{eq:putting:together}
\tau_n(p)(1-\tau_n(p))\le \delta(\mathbf{A})\sum_{e\in\cE}\mathrm{Inf}_e(\1_{E_n(p)})\le C\delta(\mathbf{A})\sum_{e\in\cE}\bbP(e\text{ is $p$-pivotal})=C(1-p)\delta(\mathbf{A})\frac{\md\tau_n(p)}{\md p},
\end{equation}
using $\mathrm{Inf}_e(\1_{E_n(p)})\le \bbP(e\text{ is $U$-pivotal})$ and Proposition~\ref{prop:pivotality:transfer} in the second inequality and Lemma~\ref{lem:russo} for the last equality.

\subsection{The randomized algorithm approach}
\label{sec:DCRT}
In this section, we apply the method of \cite{Duminil-Copin19}*{Section 3} to produce a randomized algorithm with low revealment and deduce a sharpness result for the modified one-arm event $E_n(p)$. Since there is little novelty here, the presentation is rather concise and we recommend referring to \cite{Duminil-Copin19} for more details.
\begin{lemma}[Differential inequality implies sharpness \cite{Duminil-Copin19}*{Lemma~3.1}]
\label{exp_lemma} Consider a converging sequence of differentiable functions $f_n:[p_-,p_+]\to[0,1]$ satisfying \[f'_n\geq \frac{n}{\Sigma_n}f_n,\]
for all $n\geq 1$, where $\Sigma_n=\sum_{k=0}^{n-1}f_k$. Then, there exists $p_*\in[p_-,p_+]$ such that
\begin{enumerate}
    \item for any $p<p_*$, there exists $c_{p}>0$ such that for any $n$ large enough, $f_n(p)    \leq \exp(-c_{p}n)$;
    \item for any $p>p_*$, $f=\displaystyle\lim_{n\to \infty}f_n$ satisfies $f(p)\ge p-p_*$.
\end{enumerate}
\end{lemma}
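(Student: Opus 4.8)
The plan is to exhibit the threshold explicitly: set
\[p_*:=\sup\Bigl\{p\in[p_-,p_+]:\sum_{k\ge0}f_k(p)<\infty\Bigr\},\]
with $\sup\varnothing=p_-$. Since $f_n'\ge\frac n{\Sigma_n}f_n\ge0$, every $f_n$ is non-decreasing, hence so is $p\mapsto\sum_k f_k(p)$, so the set above is an interval and $p_*$ is well defined. I would verify the two assertions for this choice of $p_*$; together they also give $p_*=\sup\{p:f(p)=0\}$, which makes the statement internally consistent.

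\emph{Assertion (1).} Fix $p<p_*$ and choose $p'$ with $p<p'<p_*$, so that $S:=\sum_k f_k(p')<\infty$. By monotonicity of the $f_k$, $\Sigma_n(r)\le\Sigma_n(p')\le S$ for all $n\ge1$ and all $r\le p'$. If $f_n(p)=0$ the claimed bound is trivial; otherwise $f_n>0$ on all of $[p,p']$, and integrating the hypothesis in the form $(\log f_n)'(r)=f_n'(r)/f_n(r)\ge n/\Sigma_n(r)\ge n/S$ over $[p,p']$ gives, using $f_n(p')\le1$,
\[f_n(p)\le f_n(p')\,e^{-n(p'-p)/S}\le e^{-c_pn},\qquad c_p:=\tfrac{p'-p}{S}>0,\]
for every $n$ (not merely $n$ large).

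\emph{Assertion (2).} Fix $p>p_*$; then $\Sigma_n(p)\to\infty$. The crux is to show $f(q)>0$ for every $q\in(p_*,p)$. Suppose instead $f(q)=0$, so $\Sigma_n(q)=o(n)$; then for any $q'\in(p_*,q)$, integrating $(\log f_n)'\ge n/\Sigma_n(r)\ge n/\Sigma_n(q)$ over $[q',q]$ and using $f_n(q)\le1$ yields $f_n(q')\le\exp(-\omega(n))$ with $\omega(n):=n(q-q')/\Sigma_n(q)\to\infty$, which forces $\Sigma_n(q')$ to grow more slowly than $\Sigma_n(q)$. Iterating this estimate down a finite chain of points in $(p_*,q)$ improves the decay rate of $f_n$ at each step, until one reaches a point $q''>p_*$ with $\sum_k f_k(q'')<\infty$, contradicting $q''>p_*$. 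Granting $f>0$ on $(p_*,p]$, fix $q\in(p_*,p)$ and write
\[\Sigma_n(p)-\Sigma_n(q)=\sum_{k<n}\int_q^p f_k'(r)\,dr\ge\sum_{k<n}\int_q^p\frac{k f_k(r)}{\Sigma_k(r)}\,dr.\]
For each $r\in[q,p]$ one has $f_k(r)\to f(r)$ and, by Cesàro, $\Sigma_k(r)/k\to f(r)\ge f(q)>0$, hence $\frac{kf_k(r)}{\Sigma_k(r)}\to1$; moreover for $k$ large, $\Sigma_k(r)\ge\Sigma_k(q)\ge\frac12 f(q)k$ on $[q,p]$, so $\frac{kf_k(r)}{\Sigma_k(r)}\le 2/f(q)$ there. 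Dominated convergence on $[q,p]$ then gives $\int_q^p\frac{kf_k(r)}{\Sigma_k(r)}\,dr\to p-q$, and Cesàro averaging yields $\liminf_n\frac1n(\Sigma_n(p)-\Sigma_n(q))\ge p-q$. Since $\Sigma_n(\cdot)/n\to f(\cdot)$, this reads $f(p)-f(q)\ge p-q$, so $f(p)\ge p-q$, and letting $q\downarrow p_*$ gives $f(p)\ge p-p_*$.

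The one genuinely delicate point is the bootstrap establishing $f>0$ above $p_*$: a single application of the log-derivative estimate improves the decay rate of $f_n$ by only one ``level'', so one must track how the improved rate feeds back into $\Sigma_n$ and iterate — the interval $(p_*,q)$ having positive length, it accommodates the required finite chain of auxiliary points — until the decay becomes summable. This is precisely \cite{Duminil-Copin19}*{Lemma~3.1}, to which we refer for the full details; the remaining steps are the routine manipulations of the differential inequality carried out above.
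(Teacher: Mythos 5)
Your Assertion (1) is correct, and with your definition of $p_*$ it is even simpler than in the cited reference: picking $p'\in(p,p_*)$ with $S=\sum_k f_k(p')<\infty$ and integrating $(\log f_n)'\ge n/\Sigma_n\ge n/S$ over $[p,p']$ is a complete argument. The genuine gap is in Assertion (2), exactly at the step you flag and then defer: proving $f(q)>0$ for $q\in(p_*,p)$. From $f(q)=0$ you only know $\Sigma_n(q)=o(n)$, with no rate. One integration gives $f_n(q')\le\exp(-(q-q')\,n/\Sigma_n(q))$, and each further integration improves the decay by a single exponentiation: if $\Sigma_n(q)\le n/L(n)$, the successive upper bounds are of order $\exp(-cL(n))$, then $\exp(-c\exp(cL(n)))$, and so on, while reaching a summable bound (needed to land at a point $q''>p_*$ with $\sum_k f_k(q'')<\infty$) requires the exponent to reach order $\log n$. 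Since $L(n)\to\infty$ may be slower than $\log^{(m)}n$ for every fixed $m$, no pre-assigned finite chain of auxiliary points in $(p_*,q)$ yields a summable bound; ``the interval has positive length'' does not bound the number of steps required, which for this scheme is not finite in general. So the bootstrap, as described, does not close, and this is precisely the heart of the lemma (ruling out a regime where $f\equiv0$ but $\sum_kf_k=\infty$); your concluding appeal to \cite{Duminil-Copin19}*{Lemma~3.1} is an admission that the key step is missing rather than a proof of it. (Note the paper itself does not reprove the lemma; it invokes that reference.)

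The argument of \cite{Duminil-Copin19}*{Lemma~3.1} is organised to avoid exactly this trap by working at the polynomial scale: $p_*$ is defined through $\limsup_n\log\Sigma_n(p)/\log n\ge1$. Below this threshold one has $\Sigma_n\le n^{1-\delta}$, so a single integration already gives stretched-exponential, hence summable, decay, and a second integration upgrades it to exponential decay (a two-step bootstrap that terminates because the input is polynomial). Above the threshold one has $\Sigma_n(p')\ge n^{1-o(1)}$ along a subsequence, and this quantitative growth is fed into $T_n(p)=\frac1{\log n}\sum_{i\le n}f_i(p)/i$ together with $\sum_{i\le n}f_i/\Sigma_i\ge\log\Sigma_{n+1}-\log\Sigma_1$, yielding $f(p)-f(p')\ge p-p'$ directly, with no need to first establish $f>0$. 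A posteriori your $p_*$ coincides with theirs (below their threshold there is exponential decay, hence summability; below yours, your own Assertion (1) applies), but proving Assertion (2) for your definition requires precisely the missing quantitative step. A repair keeping your definition: show first that $\limsup_n\log\Sigma_n(q)/\log n\ge1$ for every $q>p_*$ (otherwise the two-step bootstrap gives summability slightly below $q$, contradicting $q>p_*$), then run the $T_n$ computation; your Ces\`aro/dominated-convergence calculation presupposes $f(q)>0$ and cannot be used before positivity is known.
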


Recall that $\tau_n(p)=\bbP(E_n(p))$, with the convention $\tau_0(p)=1$, and write $\Sigma_n=\sum_{r=0}^{n-1}\tau_r(p)$. 

\begin{lemma}\label{inf_bound}
For any $n\geq 1$, there exists a randomized algorithm $\mathbf A$ determining $\1_{E_n(p)}$ with
$\delta(\mathbf A)\le 10\Sigma_n/n$.\end{lemma}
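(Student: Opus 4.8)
The plan is to construct the algorithm following the standard ``exploration from the boundary sphere'' template of Duminil-Copin--Raoufi--Tassion, adapted to the modified one-arm event $E_n(p)$ of Definition~\ref{def:Enp}. First I would pick a uniformly random radius $m\in\{0,\dots,n-1\}$ and explore, in a breadth-first manner, the set of edges reachable from the sphere $\{x:\|x\|_1=m\}$ by concatenations of a $p$-open head followed by a decreasing tail, revealing edges one at a time. Concretely, the algorithm maintains a growing cluster: starting from the inner sphere of radius $m$, it reveals the values $U_f$ of boundary edges, and whenever it discovers that an edge can serve either as a $p$-open edge extending a head, or as a decreasing-tail edge continuing from an already-reached vertex, it adds the corresponding endpoint to the explored set. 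Because $E_n(p)$ is measurable with respect to $(U_e\mathbbm 1_{U_e\le p})_{e\in\cE}$ together with the relative order of the revealed values, this exploration terminates and determines $\1_{E_n(p)}$: either the exploration reaches the origin along a head-then-tail path (so $E_n(p)$ occurs), or it exhausts without reaching $0$ (so the relevant structure, if any witnessing path existed, would have had to use an unexplored edge, contradicting that the explored region is closed under the reachability relation).

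Next I would bound the revealment $\delta_e(\mathbf A)=\bbP(\mathbf A\text{ reveals }U_e)$ for a fixed edge $e$. The key observation is the usual one: conditionally on the random starting radius $m$, the edge $e$ is revealed only if the exploration cluster (grown from the sphere of radius $m$) reaches an endpoint of $e$ or an adjacent vertex, and this forces a head-then-decreasing-tail structure connecting the sphere $\{\|x\|_1=m\}$ to the vicinity of $e$, which is exactly (a translate/sub-event of) the event $E_{|\,\|e\|_1-m\,|+O(1)}(p)$ up to bounded corrections. Summing over the $n$ equally likely choices of $m$, the probability that $e$ is revealed is at most $\frac1n\sum_{m}\tau_{|\,\|e\|_1-m\,|+O(1)}(p)\le \frac{c}{n}\sum_{r=0}^{n-1}\tau_r(p)=\frac{c\Sigma_n}{n}$, using monotonicity of $\tau_r$ in $r$ and the convention $\tau_0=1$; a careful accounting of the $O(1)$ slack and the two cases ($e$ revealed via its head role vs.\ its tail role) yields the stated constant $10$.

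The main obstacle I anticipate is making the exploration rule genuinely well-defined and monotone for the \emph{modified} event rather than the ordinary connectivity event, because the ``head'' and ``tail'' halves interact: an edge's eligibility as a head edge depends on $\omega_p$, which is itself a complicated function of $U$, while its eligibility as a tail edge depends only on the ordering of the revealed $U$-values relative to $p$ and to neighbouring revealed edges. One must be careful that the algorithm never needs to reveal an edge it has not yet committed to, and that the stopping condition (``the value of $\1_{E_n(p)}$ is determined independently of unrevealed coordinates'') is correctly met; here the fact that in $\Omega$ there are no infinite decreasing walks guarantees finiteness of the explored region, and the fact that $\omega_p(U,e)$ depends on only finitely many coordinates (as noted after the definition of $\omega_p$) guarantees the algorithm can decide head-eligibility of each edge from finitely much revealed information. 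I would handle this by phrasing the exploration as revealing, for each newly queried edge, its value $U_f$ in full (not just $\mathbbm 1_{U_f\le p}$), which is harmless for the revealment bound and makes head-eligibility decidable; the argument then parallels \cite{Duminil-Copin19}*{Section~3} closely enough that I would keep the write-up concise, as the excerpt already signals.
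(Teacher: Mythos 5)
There is a genuine gap in the determination step, and it is exactly the point where the modified event $E_n(p)$ differs from ordinary connectivity. Your algorithm explores, from the random sphere of radius $m$, only the vertices reachable by \emph{nice} (open-head-then-decreasing-tail) paths started on that sphere, and you claim this determines $\1_{E_n(p)}$ because any witness would have to use an explored edge. But consider a witness $x_0,\dots,x_l,\dots,x_k=0$ of $E_n(p)$ whose middle $x_l$ satisfies $\|x_l\|_1\ge m$: then it is the \emph{decreasing tail}, not the head, that crosses the sphere $\{\|x\|_1=m\}$, and the open head from $\partial_n$ down to $x_l$ need never touch that sphere nor its open cluster. Your exploration will discover the terminal piece of the tail (a decreasing path from the sphere to $0$) but not the head, and the revealed coordinates then do not determine whether $E_n(p)$ occurs — occurrence may hinge on unrevealed edges far from the explored set. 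Relatedly, your positive criterion ``the exploration reaches the origin along a head-then-tail path, so $E_n(p)$ occurs'' is not correct as stated: a nice path from the sphere of radius $m$ to $0$ is not a witness of $E_n(p)$, which requires the path to start on $\partial_n$; the correct argument decomposes a witness at its crossing of the random sphere into an open piece (lying in the explored open cluster) and a nice piece from the sphere to $0$.

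The paper handles the problematic case with an extra rule that your construction is missing: if the exploration from radius $r$ discovers a decreasing path from the sphere of radius $r$ to the origin, the algorithm adjoins $\partial_n$ to its seed set and re-runs the exploration from there, so that it then directly detects whether a nice path from $\partial_n$ to $0$ exists. The cost of this restart is controlled because the trigger itself implies $E_r(p)$ (a decreasing path is a witness with empty head), which is why the paper's revealment bound is $\tau_r(p)+\tau_{|\|x\|_1-r|}(p)+\tau_{|\|y\|_1-r|}(p)$ rather than only the two translated terms you have; this first term is absent from your accounting, consistent with the missing rule. Without some device of this kind (or another way of certifying the event when the tail crosses the sphere), the algorithm you describe does not determine $\1_{E_n(p)}$, so the lemma's conclusion does not follow from your construction even though the revealment bookkeeping you sketch would otherwise go through.
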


\begin{proof}
We call a path of vertices $(x_0,\dots,x_k)$ in $\bbZ^d$ \emph{nice} if there exists $l\in\{0,\dots,k\}$ such that, for all $i\in\{1,\dots,l\}$, we have $\omega_p(U,x_{i-1}x_i)=1$ and the path $x_l,\dots,x_k$ is decreasing. We refer to $x_l$ as the \emph{middle} of the nice path. For fixed $r\in\{1,\dots,n\}$, the algorithm $\bA_r$ is defined as follows. Initialize $R_0=\{x\in\bbZ^d:\|x\|_1=r\}=:\partial_r$, $S_0=\varnothing$ and $T_0=\varnothing$. Assume $R_m\subset \bbZ^d$ and $S_m,T_m\subset \E$ are given, and proceed according to the following cases, choosing edges arbitrarily if multiple edges satisfy the conditions.

\begin{enumerate}
\item\label{algo:i} If there exist $xy\in T_m$ with $x\in R_m$ and $y\not\in R_m$, we set $S_{m+1}=S_m$, $T_{m+1}=T_m$ and $R_{m+1}=R_m\cup \{y\}$.
\item\label{algo:ii} Otherwise, if there exists $xy\in S_m\setminus T_m$ such that $\|x\|_1<n$ and the restriction of $U$ to $S_m$ is sufficient to establish that $\omega_p(U,xy)=1$, we let $R_{m+1}=R_m$, $S_{m+1}=S_m$ and $T_{m+1}=T_m\cup\{xy\}$.
\item\label{algo:iii} Otherwise, if there exists $xy\in\cE$ and a decreasing path $x_l,\dots,x_k=x$ (possibly consisting of a single vertex) with edges in $S_m$ and $x_l\in R_m$, we set $R_{m+1}=R_m$, $S_{m+1}=S_m\cup\{xy\}$ and $T_{m+1}=T_m$.
\item\label{algo:iv} Otherwise, if $R_m\not\supset \partial_n$ and there is a decreasing path with edges in $S_m$ from $\partial_r$ to $0$, set $R_{m+1}=R_m\cup\partial_n$, $S_{m+1}=S_m$, $T_{m+1}=T_m$.
\end{enumerate}

Let us make a few observations about this algorithm. Firstly, whenever \ref{algo:i} does not occur, then $R_m$ is the set of vertices that can be reached from $R_0'$ via edges in $T_m$, where 
\[R'_0=\begin{cases}\partial_r&R_m\not\supset\partial_n,\\
\partial_r\cup\partial_n&R_m\supset\partial_n.
\end{cases}\]
Therefore, whenever we apply \ref{algo:iii}, there is a path of edges in $T_m$ from $R_0'$ to $x_l$, which are therefore known to be $p$-open based on the restriction of $U$ to the set $S_m$ of explored edges. In particular, whenever an edge is explored, we have discovered a nice path from $R_0'$ to an endpoint of this edge. 

Since $U\in\Omega$, the algorithm clearly terminates. Assume the algorithm has terminated. Then, for any edge $xy\in \cE\setminus T_m$ and with $\|x\|_1<n$ and $\{x,y\}\cap R_m\neq\varnothing$, we have $xy\in S_m\setminus T_m$ and $\omega_p(U,xy)=0$. Indeed, edges forming a decreasing path starting at the above edge $xy$ are certainly in $S_m$ (by \ref{algo:iii}) and are sufficient to determine $\omega_p(U,xy)$. Therefore, $R_m$ is exactly the union of the connected components of the vertices in $R_0'$ in the graph with edge set $\{xy\in\cE:\|x\|_1<n,\omega_p(U,xy)=1\}$. Consequently, all nice paths starting at $R_0'$ are contained in $S_m$, and the restriction of $U$ to $S_m$ suffices to determine that they are nice. 

Finally, let us see that the algorithm does determine the value of $\1_{E_n(p)}(U)$. If $R_m\not\supset\partial_n$, then there is no decreasing path from $\partial_r$ to $0$, so a path witnessing $E_n(p)$ would need to have its head $x_l$ satisfying $\|x_l\|<r$, so we can decompose it into a $p$-open path from $\partial_n$ to $\partial_r$ and a nice path from $\partial_r$ to $0$. Therefore, the restriction of $U$ to $S_m$ is sufficient to determine whether $E_n(p)$ occurs. If, on the contrary, $R_m\supset \partial_n$, then in particular we know if there is a nice path from $\partial_n$ to $0$, which is the event $E_n(p)$.

Let us fix an edge $xy\in\cE$ and bound $\delta_{xy}(\mathbf A_r)$. If \ref{algo:iv} was applied during the algorithm, then $E_r(p)$ occurs. If $E_r(p)$ does not occur, then 
\[\sigma_x E_{|\|x\|_1-r|}(p)\cup\sigma_y E_{|\|y\|_1-r|}(p)\]
occurs, where $\sigma_z$ is the shift by $z\in\bbZ^d$. By the union bound, this gives
\[\delta_{xy}(\bA_r)\le \tau_r(p)+\tau_{|\|x\|_1-r|}(p)+\tau_{|\|y\|_1-r|}(p).\]

Now apply algorithm $\bA_r$ with probability $1/n$ for each $r\in\{1,\dots,n\}$. Then, for any $e\in\cE$,
\[\delta_e(\bA)=\frac1n\sum_{r=1}^n\delta_e(\bA_r)\le \frac5n\sum_{k=0}^{n}\tau_k(p)\le \frac{10}{n}\Sigma_n.\qedhere\]
\end{proof}

\begin{proposition}[Sharpness for the modified one-arm event]
\label{prop:sharpness:tau}
    Consider the CDP on $\bbZ^d$, $d\geq 2$, with $\kappa\in\{2,\dots,2d\}$. There exists $p_*\in[0,1]$ such that
\begin{enumerate}
\item For $p<p_*$, there exists $\delta_p>0$ such that for all $n\geq 1$, $\tau_n(p)\leq e^{-\delta_pn}.$
\item If $p_*>0$, then there exists $c>0$ such that for $p>p_*$, $\tau(p)=\displaystyle\lim_{n \to \infty}\tau_n(p)>c(p-p_*)$.
\item If $p_*=0$, then $\tau(p)>0$ for all $p>p_*$. 
\end{enumerate}
\end{proposition}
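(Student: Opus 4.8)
The plan is to combine the OSSS-based differential inequality from \eqref{eq:putting:together} with the revealment bound from Lemma~\ref{inf_bound} to produce a differential inequality of the form covered by Lemma~\ref{exp_lemma}, and then translate its conclusions into the three claimed statements. Concretely, from \eqref{eq:putting:together} we have $\tau_n(p)(1-\tau_n(p))\le C(1-p)\delta(\mathbf A)\tau_n'(p)$, and plugging in $\delta(\mathbf A)\le 10\Sigma_n/n$ from Lemma~\ref{inf_bound} gives
\[
\tau_n'(p)\ge \frac{n}{10C(1-p)\Sigma_n}\,\tau_n(p)(1-\tau_n(p)).
\]
The factor $(1-\tau_n(p))$ and the $p$-dependent prefactor $10C(1-p)$ are the only obstructions to applying Lemma~\ref{exp_lemma} verbatim, so the first task is to absorb them by restricting to a compact parameter interval and handling the regime where $\tau_n$ is close to $1$ separately.

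First I would fix an arbitrary compact interval $[p_-,p_+]\subset(0,1)$ on which, by the stated uniformity of $C=C(p,d)$ over compacts, we have $10C(1-p)\le M$ for a constant $M=M(p_-,p_+,d)$. On this interval the inequality becomes $\tau_n'\ge \frac{n}{M\Sigma_n}\tau_n(1-\tau_n)$. To deal with the $(1-\tau_n)$ term, I would split into two cases depending on whether $\tau_{n}(p_+)$ stays bounded away from $1$ along a subsequence or not; the cleaner route, as in \cite{Duminil-Copin19}, is to note that it suffices to prove statements (i)--(iii) and these are statements about small $p$ and about positivity of the limit, so we may assume $\tau_n(p)\le 1/2$ for the values of $p$ and $n$ relevant to the subcritical estimate (if $\tau_n(p_0)>1/2$ for some $n$ and $p_0$ then monotonicity in $p$ already forces $\tau(p)\ge \tau_n(p)\ge 1/2>0$ for all $p\ge p_0$, which is more than enough for (ii) and (iii) at such $p$, and such $p_0$ cannot be below the eventual $p_*$). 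Thus on the relevant range $\tau_n(1-\tau_n)\ge \tau_n/2$, yielding $\tau_n'\ge \frac{n}{2M\Sigma_n}\tau_n$, which is exactly the hypothesis of Lemma~\ref{exp_lemma} with $f_n=\tau_n$ (after rescaling $n\mapsto n/(2M)$, or equivalently absorbing the constant into $c_p$ and into the constant $c$ in the conclusion).

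With the hypothesis of Lemma~\ref{exp_lemma} verified on $[p_-,p_+]$, the lemma produces a threshold $p_*^{[p_-,p_+]}\in[p_-,p_+]$ with exponential decay of $\tau_n$ below it and the linear lower bound $\tau(p)\ge c(p-p_*^{[p_-,p_+]})$ above it. I would then let $p_-\downarrow 0$ and $p_+\uparrow 1$ and check that the thresholds are consistent, so that they define a single $p_*\in[0,1]$: indeed exponential decay of $\tau_n(p)$ at a given $p$ is a property of that $p$ alone, so the set of $p$ where it holds is a down-set, and $p_*$ is its supremum. Statement (i) is then immediate for every $p<p_*$ (pick any compact interval containing $p$ and below $p_*$). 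For statement (ii), if $p_*>0$ we apply the conclusion of Lemma~\ref{exp_lemma} on a compact interval straddling $p_*$ to get $\tau(p)>c(p-p_*)$ for $p$ slightly above $p_*$, and extend to all $p>p_*$ by monotonicity of $p\mapsto\tau_n(p)$ hence of $\tau$. For statement (iii), if $p_*=0$ then for every $p>0$ there is a compact interval $[p_-,p]$ with $p_-$ below the interval's own threshold only if that threshold is $0$; since the global $p_*=0$, exponential decay fails at $p$, and combined with the differential inequality (which shows $\tau_n(p)$ cannot decay unless it decays exponentially — this is the dichotomy inside Lemma~\ref{exp_lemma}) we conclude $\tau(p)>0$.

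The main obstacle I anticipate is purely bookkeeping rather than conceptual: handling the $p$-dependence of the constant $C$ and the $(1-\tau_n)$ factor cleanly enough that a single $p_*$ emerges from the family of interval-localised thresholds, and making sure the ``$\tau_n\le 1/2$'' reduction is logically airtight (i.e.\ that exceeding $1/2$ can only happen at $p$'s that are anyway above the eventual $p_*$, so it does not interfere with statement (i)). Everything else is a direct quotation of Lemma~\ref{exp_lemma} applied to $f_n=\tau_n$.
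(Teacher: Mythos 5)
There is a genuine gap, and it sits exactly where you flagged the ``bookkeeping'': your treatment of the factor $(1-\tau_n(p))$. Your reduction to ``$\tau_n\le 1/2$ on the relevant range'' rests on the claim that if $\tau_n(p_0)>1/2$ for some $n$ and $p_0$, then $\tau(p)\ge\tau_n(p)\ge 1/2$ for $p\ge p_0$, so such $p_0$ lies above the eventual $p_*$. This is backwards: $E_n(p)$ is \emph{decreasing} in $n$, so $\tau_n(p)$ decreases in $n$ and $\tau(p)=\lim_m\tau_m(p)\le\tau_n(p)$; a large value of $\tau_n(p_0)$ for one (small) $n$ says nothing about positivity of the limit. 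Indeed $\tau_1(p)\ge 1-(1-p)^{2d}>1/2$ already for moderate $p$ that may be well below $p_*$ (a single edge at the origin with $U_e\le p$ is a decreasing path witnessing $E_1(p)$). Consequently, at such $p$ the inequality $\tau_n'\ge \frac{n}{2M\Sigma_n}\tau_n$ is not established for the small values of $n$, whereas Lemma~\ref{exp_lemma} requires the differential inequality for \emph{all} $n\ge1$ throughout the interval $[p_-,p_+]$; your interval-localised thresholds therefore do not come with a verified hypothesis, and the subsequent patching argument has nothing to patch.

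The fix is short and is what the paper does: bound $1-\tau_n(p)$ from below \emph{uniformly} rather than case-splitting. Any configuration in $E_n(p)$ must contain an edge incident to the origin with $U_e\le p$ (the last edge of the witnessing path, whether it belongs to the head or the tail), so $1-\tau_n(p)\ge(1-p)^{2d}\ge(1-p_+)^{2d}$ for all $n\ge1$ and $p\in[p_-,p_+]$. Absorbing this constant, together with the uniform bound on $C(p,d)$ and the factor $(1-p)$, into a single constant $C=C(p_-,p_+,d)$ yields $\tau_n'(p)\ge\frac{n}{C\Sigma_n}\tau_n(p)$ on all of $[p_-,p_+]$ and for all $n$, so Lemma~\ref{exp_lemma} applies directly; letting $p_-=1-p_+\to0$ then gives a single $p_*$ and the three statements, essentially as in the rest of your outline (which is otherwise in line with the paper's argument).
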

\begin{proof} Fix arbitrary $0<p_-<p_+<1$ and note that $1-\tau_n(p)\ge (1-p)^{2d}\ge (1-p_+)^{2d}$ for $p\in[p_-,p_+]$. Combining this with Lemma~\ref{inf_bound} and \eqref{eq:putting:together}, we obtain 
\[\tau_n'(p)\ge \frac{n}{C\Sigma_n}\tau_n (p),\]
for a constant $C>0$ depending only on $p_-,p_+,d$. The desired result then follows from Lemma~\ref{exp_lemma} by taking $p_-=1-p_+\to0$.
\end{proof}

\begin{proof}[Proof of Theorem \ref{th:main}]
Let us first observe that $p_*$ in Proposition~\ref{prop:sharpness:tau} is not zero. Indeed, this follows by direct comparison with Bernoulli bond percolation with parameter $p$ in view of Definition~\ref{def:Enp}. It follows from Definition~\ref{def:Enp} that
$\theta_n(p)\leq \tau_n(p)$, for all $n\geq 1$ and for all $p\in[0,1]$. Hence, for all $p<p_*$, we have $\theta_n(p)\leq \exp(-\delta_p n)$ for all $n\geq 1$.

Write $E_{\infty}(p)=\lim_{n\to \infty}E_n(p)$, and assume $p>p_*$. Let $U\in E_{\infty}(p)$ and suppose that $\omega_p(U)$ contains no infinite cluster. Then, for all $j\geq 1$, there exists an infinite path $0=x_0,x_1,\dots$ such that the sequence $i\mapsto U_{x_ix_{i+1}}$ is strictly monotone. This event of probability zero was already excluded in the definition of $\Omega$. Thus, $\theta(p)>0$. Hence, $p_c=p_*$, and the proof is complete. 
\end{proof}

\section*{Acknowledgements}
We thank Lyuben Lichev for a particularly enlightening discussion around the space-filling tree referred to in Section~\ref{sec:outline}. Part of this work was carried out during Roger Silva's visit to the Institut Camille Jordan, Université Claude Bernard Lyon 1, CNRS. He gratefully acknowledges their hospitality. Roger Silva was partially supported by FAPEMIG, grant APQ-06547-24. For the purpose of Open Access, a CC-BY public copyright licence has been applied by the authors to the present document and will be applied to all subsequent versions up to the Author Accepted Manuscript arising from this submission.

\appendix
\section{Proof of Corollary~\ref{cor:size}}
\label{app}

The proof relies on a one-step renormalisation scheme. For $N,M\in \bbN$ and $x=(x_1,\dots,x_d)\in \bbZ^d$, define $$B_{N,M}(x)=Nx+[-M,M)^d,$$ the box of side-length $2M$ centered at $Nx$. Fix $x\in\bbZ^d$ and consider the events
\[\cE_N(x):\mbox{the box $B_{N,N}(x)$ contains no $p$-open path of $\ell^\infty$-diameter 
at least $N/10$ inside $B_{N,\lfloor\frac{3N}{4}\rfloor}(x)$},\]
\[\cF_N(x):\mbox{there are no decreasing paths of $\ell^\infty$-diameter 
at least $N/10$ in $B_{N,N}(x)$}.\]

We introduce a renormalised lattice, isomorphic to $\bbZ^d$, whose vertices correspond to the boxes $$\{B_{N,N}(x), x\in\bbZ^d\}.$$ Given a configuration $U\in\Omega$ and $x\in\bbZ^d$, we say that the box $B_{N,N}(x)$ is \textit{good} in $U$ if $\cE_N\cap\cF_N$ occurs; otherwise it is \textit{bad}.

We claim that the resulting renormalised process of good and bad boxes is 1-dependent (with respect to the $\|\cdot\|_\infty$ norm). To see this, it suffices to show that the event $\cE_N(x)\cap\cF_N(x)$ is measurable with respect to $(U_e)_{e\subset B_{N,N}(x)}$. Indeed, the presence of decreasing paths is clearly measurable and, on $\cF_N(x)$, the $p$-open edges in $B_{N,\lfloor 3N/4\rfloor}(x)$ only depend on $U_e$ for $e\subset B_{N,\lceil N(3/4+1/10)\rceil}\subset B_{N,N}$.

Fixing $x\in\bbZ^d$ and $N>10$, let us estimate the probabilities of $\cE_N(x)$ and $\cF_N(x)$. By the union bound, translation invariance, and Theorem \ref{th:main}, we obtain
\begin{equation}\label{open_path}
\bbP(\cE_N^c)\leq (2 
N)^d e^{-\alpha_p N/10}.
\end{equation}
Similarly, by the union bound, we have 
\begin{equation}\label{decre_path}
\bbP(\cF_N^c)\leq \frac{(2N)^d 2d(2d-1)^{\lfloor N/10\rfloor-1}}{\lfloor N/10\rfloor!}.
\end{equation}
Combining \eqref{open_path} and \eqref{decre_path}, we conclude that 
\[\lim_{N\to\infty}\bbP(B_{N,N}(x)\mbox{ is good})=1.\] 

By the Liggett-Schonmann-Stacey theorem \cite{Liggett97}, one can choose $N$ sufficiently large such that this 1-dependent renormalised process stochastically dominates a supercritical independent site percolation model. For such a model, the size of the cluster of vertices corresponding to bad boxes (which is subcritical) is known to have an exponential tail (see Theorem 6.75 in \cite{Grimmett99}). The result then follows from the observation that the $p$-open cluster $C_0$ of the origin is contained in the box $B_{N,N}(0)$ together with the cluster of bad boxes containing the origin of the renormalised lattice.

\bibliographystyle{plain}
\bibliography{Bib}

\end{document}